\title{L-infinity optimization to linear spaces and phylogenetic trees}
\author{Daniel Irving Bernstein}
\author{Colby Long}
\address{Department of Mathematics \\ North Carolina State University, Raleigh, NC 27695}
\email{dibernst@ncsu.edu}
\email{celong2@ncsu.edu}
\theoremstyle{plain}
\newtheorem{thm}{Theorem}[section]
\newtheorem{lemma}[thm]{Lemma}
\newtheorem{prop}[thm]{Proposition}
\newtheorem{cor}[thm]{Corollary}
\newtheorem*{thm*}{Theorem}
\newtheorem*{lemma*}{Lemma}
\newtheorem*{prop*}{Proposition}
\newtheorem*{cor*}{Corollary}
\newtheorem*{conj*}{Conjecture}
\theoremstyle{definition}
\newtheorem{defn}[thm]{Definition}
\newtheorem*{defn*}{Definition}
\newtheorem{ex}[thm]{Example}
\newtheorem{pr}[thm]{Problem}
\theoremstyle{remark}
\newcommand{\rr}{\mathbb{R}}
\newcommand{\calm}{\mathcal{M}}
\newcommand{\calt}{\mathcal{T}}
\newcommand{\ind}{\mbox{$\perp \kern-5.5pt \perp$}}
\newcommand{\inv}{^{-1}}
\newcommand{\rank}{\textnormal{rank}}
\newcommand{\supp}{\textnormal{supp}}
\newcommand{\cone}{\textnormal{cone}}
\newcommand{\type}{\ensuremath\textnormal{type}}
\newcommand{\sign}{\ensuremath\textnormal{sign}}
\tikzstyle{vertex}=[circle, draw, inner sep=0pt, minimum size=6pt, fill=black]
\newcommand{\vertex}{\node[vertex]}
\begin{document}
\maketitle

\begin{abstract}

Given a distance matrix consisting of pairwise
distances between species, a distance-based phylogenetic
reconstruction method returns a tree metric or equidistant tree metric (ultrametric) that best fits the data. 
We investigate distance-based phylogenetic reconstruction
using the $l^\infty$-metric. In particular, we analyze 
the set of $l^\infty$-closest ultrametrics and tree metrics
to an arbitrary dissimilarity map to determine its
dimension and the tree topologies it represents.
In the case of ultrametrics, we decompose the space of dissimilarity maps on 3 elements and on 4 elements relative to the tree topologies represented.

Our approach is to first address uniqueness 
issues arising in $l^\infty$-optimization to linear spaces.
We show that the $l^\infty$-closest point in a linear space is unique if and only if the underlying matroid of the linear space is uniform.  We also give a polyhedral decomposition of $\rr^m$ based on the dimension of the set of $l^\infty$-closest points in a linear space.

\end{abstract}

\section{Introduction}

One approach to phylogenetic reconstruction is to use \emph{distance-based} methods. 
Given a distance matrix consisting of the pairwise
distances between $n$ species, a distance-based method
returns a tree metric or equidistant tree metric (ultrametric)
that best fits the data. Typically, the distance matrix
is constructed from biological data.
It has been shown that both the set of equidistant tree metrics
and the set of tree metrics have close connections to 
tropical geometry \cite{ardila2004,ardila-klivans2006,Speyer}.
Because addition in the tropical semiring is defined as taking the maximum of two elements, the $l^\infty$-metric offers an 
appealing choice as a measure of best fit for phylogenetic reconstruction.

Computing a closest tree metric to a given distance matrix 
using the $l^\infty$-metric is NP-hard \cite{agarwala1998}.
However, there exists a polynomial-time algorithm for computing an $l^\infty$-closest \emph{equidistant} tree metric \cite{chepoi}.
Although the algorithm gives us a way to compute
 a closest equidistant tree metric to an arbitrary point in
 $\rr^{\binom{n}{2}}$ quickly, the set of closest equidistant tree
 metrics is not in general a singleton. Indeed, it may
 be of high dimension or contain points corresponding to 
 trees with entirely different topologies. Thus, for phylogenetic
 reconstruction, there may be several different trees that explain the data equally well from the perspective of the $l^{\infty}$-metric. Recent work has studied the 
properties of equidistant tree space with the $l^\infty$-metric \cite{ardila2004,lin-sturmfels2016,lin-yoshida2016} but
to our knowledge the dimensions and topologies of the sets
of $l^\infty$-closest equidistant tree metrics have not been examined. 
Similarly, one might ask all of the same questions 
for tree metrics. 
Thus, we are motivated by the following problem.

\begin{pr}
\label{pr:phylogenetics}
	Given a dissimilarity map $x \in \rr^{\binom{n}{2}}$,
	describe the set of (equidistant) tree metrics that are closest to $x$ in the $l^\infty$-metric.
\end{pr}

For both equidistant tree metrics and tree metrics,
 we obtain results concerning 
the dimensions of these sets
as well as the tree topologies involved.
Since the set of tree metrics and the set of 
equidistant tree metrics on $n$ species are 
both polyhedral complexes, 
we begin by addressing the following problem as a stepping stone. The results obtained may be of independent interest
to those studying combinatorics or optimization.

\begin{pr}\label{pr:linearspace}
	Given a point $x \in \rr^m$ and a linear space $L \subseteq \rr^m$,
	describe the subset of $L$ consisting of points that are closest to $x$ in the $l^\infty$-metric.
\end{pr}

Just as with tree metrics, 
the $l^\infty$-closest point in a linear space is not unique in general.
We give a polyhedral decomposition of $\rr^m$
based on the dimension of the set of points in $L$ that are $l^\infty$-closest to $x$.
One particularly nice implication of this decomposition is the following.

\begin{restatable*}{Theorem}{linearunique}
\label{thm:linearunique}
	Let $L \subseteq \rr^m$ be a linear space.
	Then the $l^\infty$-closest point to $x$ in $L$ is unique for all $x \in \rr^m$
	if and only if the matroid underlying $L$ is uniform.
\end{restatable*}

The set of (equidistant) tree metrics on a fixed set of species is a polyhedral fan.
Each open cone in this fan is the set of (equidistant) tree metrics corresponding to a particular tree topology.
For many dissimilarity maps,
optimizing to the set of (equidistant) tree metrics will be equivalent 
to optimizing to the
linear hull of one such maximal cone.
The equations defining the linear hulls of these cones
are highly structured and the corresponding matroids are not uniform. Therefore, Theorem \ref{thm:linearunique} implies the existence
of dissimilarity maps with a positive-dimensional set of $l^\infty$-closest
(equidistant) tree metrics.
For example, we show that there is a full-dimensional set of dissimilarity maps in $\rr^{\binom{n}{2}}$ for
which the set of $l^\infty$-closest 
equidistant tree metrics has dimension $n -2$.
Our construction shows that we can often obtain many $l^\infty$-closest equidistant tree metrics to a dissimilarity map by adjusting
branch lengths in an equidistant tree representing one such metric. 
We will also see that there are dissimilarity maps for which the set of $l^\infty$-closest equidistant tree metrics
contains equidistant tree metrics representing
different tree topologies.
In the case of $4$-leaf trees, we provide a 
decomposition of $\mathbb{R}^{\binom{4}{2}}$ according to the
topologies represented.

We begin in Section \ref{sec:linearspaces}  with our results on $l^\infty$-optimization to linear spaces.
In particular, we give a natural way to assign a combinatorial type to each $x \in \rr^m$ with respect to some linear subspace $L \subseteq \rr^m$.
We show that this combinatorial type gives a polyhedral decomposition of $\rr^m$ based on the dimension
of the set of $l^\infty$-closest points in $L$, from which
Theorem \ref{thm:linearunique} follows.
Section \ref{sec:phylogenetics} applies the results and ideas
for linear spaces to phylogenetics.
We investigate questions that would be of practical interest for phylogenetic reconstruction such as the dimension and
corresonding tree topologies in the set of closest ultrametrics.
We conclude by exploring the $l^\infty$-metric as a distance-based method for reconstructing tree metrics.

\section{$l^\infty$-optimization to Linear Spaces}
\label{sec:linearspaces}
Given a linear space $L \subseteq \rr^m$,
we demonstrate a way to associate a sign vector in 
$\{+,-,0\}^m$ to each $x \in \rr^m$.
The associated sign vectors are then precisely
the elements of the oriented matroid associated to $L$.
For each $x \in \rr^m$, this vector will encode information about the dimension of the set of $l^\infty$-closest points to $x$ in $L$.
\\
\indent
We begin this section by reviewing the necessary background from oriented matroid theory.
More details can be found in \cite[Ch. 6 and 7]{ziegler2000}.

\subsection{Background on Oriented Matroids}
For any real number $r\in \rr$, $\sign(r) \in \{+,-,0\}$ is the
\emph{sign of $r$}.
For a linear functional $c \in (\rr^m)^*$,
$\sign(c) \in \{+,-,0\}^m$ is defined by $\sign(c)_i = \sign(c_i)$.
Given a sign vector $\sigma \in \{+,-,0\}^m$, we define $|\sigma| := \#\{i: \sigma_i \neq 0\}$.
For a linear space $L \subseteq \rr^m$ the \emph{oriented matroid associated to $L$}, denoted $\mathcal{O}_L$,
is the set of all sign vectors $s$ in $\{+,-,0\}^m$  such that 
$s = \sign(c)$
for some linear functional $c \in (\rr^m)^*$ that vanishes on $L$.

The elements of an oriented matroid $\mathcal{O}$ are the \emph{signed vectors} of $\mathcal{O}$.
Let $\prec^*$ be the partial order on $\{+,-,0\}$ given by $0 \prec^* +$ and $0 \prec^* -$ with $+$ and $-$ incomparable.
Then $\prec$ is the partial order on $\{+,-,0\}^m$ that is the cartesian product of $\prec^*$ $m$ times.
The \emph{signed circuits} of an oriented matroid $\mathcal{O}$ are the signed vectors of $\mathcal{O}$ that are minimal with respect to $\prec$.

An oriented matroid can also be derived from a \emph{zonotope},
the image of a cube under an affine map.
Let $C_\delta(x) \subseteq \rr^m$ denote the cube of side length $2\delta$ centered at $x$.
That is,
\[
	C_\delta(x) = \{y \in \rr^m : |y_i - x_i| \le \delta, i = 1, \dots, m\}.
\]
To each face $F$ of $C_\delta(x)$, associate a sign vector $\sign(F) \in \{+,-,0\}^m$ as follows
\[
	\sign(F)_i = \begin{cases}
		+ & \text{ if } y_i = x_i + \delta \quad \text{for all } y \in F \\
		- & \text{ if } y_i = x_i - \delta \quad \text{for all } y \in F \\
		0 & \text{ otherwise}.
	\end{cases}
\]

\noindent Figure \ref{fig:type2d} gives an illustration of the sign vectors associated to a square.

\begin{figure}[h!]
	\centering
	\begin{tikzpicture}
	\node (v) at (0,0)[label=center:{$(0,0)$}]{};
	\vertex (v) at (45:1) [label=45:{$(+,+)$}]{};
	\vertex (x) at (135:1) [label=135:{$(-,+)$}]{};
	\vertex (y) at (225:1) [label=225:{$(-,-)$}]{};
	\vertex (z) at (315:1) [label=315:{$(+,-)$}]{};
	\path
		(v) edge node[above]{$(0,+)$} (x)
		(x) edge node[left]{$(-,0)$} (y)
		(y) edge node[below]{$(0,-)$} (z)
		(z) edge node[right]{$(+,0)$} (v)
	;
	\end{tikzpicture}
	\caption{Sign vectors corresponding to faces of a square.}\label{fig:type2d}
\end{figure}
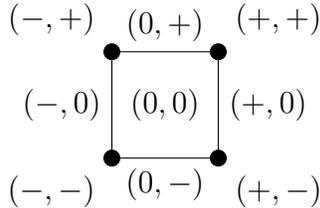

Let $V \in \rr^{(m - d)\times m}$ be a matrix of full row rank and let
$\pi: \rr^m \rightarrow \rr^{m - d}$ be the
affine map given by $x \mapsto  Vx$. 
For fixed $x \in \rr^m$ and $\delta > 0$, 
$\pi(C_\delta(x)) \subset \rr^{m - d}$ is a polytope called the
\emph{zonotope associated to $V$}.
The inverse image of each face of the zonotope is a face of  $C_\delta(x)$. Thus, for each face $G$ of $\pi(C_\delta(x))$,
we define $\sign(G) := \sign(\pi^{-1}(G)).$
The collection of all such sign vectors is an oriented matroid which only depends on the matrix $V$
and so we denote it $\mathcal{O}_V$.
The following proposition relates oriented matroids from zonotopes to oriented matroids from linear spaces.
\begin{prop}[\cite{ziegler2000},Corollary 7.17]\label{linearzonotopes}
	Let $V \in \rr^{(m - d)\times m}$ be a matrix of full row rank.
	Then $\mathcal{O}_V = \mathcal{O}_{\ker V}$.
\end{prop}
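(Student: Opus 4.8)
The plan is to prove the set equality $\mathcal{O}_V = \mathcal{O}_{\ker V}$ by showing that \emph{both} collections of sign vectors coincide with the single set $\{\sign(V^\top y) : y \in \rr^{m-d}\}$, i.e. the set of signs of the linear functionals lying in the row space of $V$. Once both oriented matroids are identified with this common set, equality is immediate.

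First I would identify $\mathcal{O}_{\ker V}$ with this set. By definition $\mathcal{O}_{\ker V}$ consists of all $\sign(c)$ where $c \in (\rr^m)^*$ vanishes on $\ker V$. Standard linear algebra gives $(\ker V)^\perp = \{V^\top y : y \in \rr^{m-d}\}$, the row space of $V$, so a functional vanishes on $\ker V$ precisely when it equals $V^\top y$ for some $y$. Hence $\mathcal{O}_{\ker V} = \{\sign(V^\top y) : y \in \rr^{m-d}\}$. (Full row rank of $V$ makes $y \mapsto V^\top y$ injective, but the set equality itself does not need it.)

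Second, and this is the heart of the argument, I would show $\mathcal{O}_V$ equals the same set. Fix $y \in \rr^{m-d}$ and set $c := V^\top y \in (\rr^m)^*$. Let $G_y$ be the face of the zonotope $Z = \pi(C_\delta(x))$ maximizing $y$, and let $F_y$ be the face of the cube $C_\delta(x)$ maximizing $c$. The key identity is $\pi^{-1}(G_y) = F_y$: since $\langle y, Vw\rangle = \langle V^\top y, w\rangle = \langle c, w\rangle$ for all $w$, the maximum of $y$ over $Z$ equals the maximum of $c$ over $C_\delta(x)$, and a point $w$ of the cube lies in the face $\pi^{-1}(G_y)$ exactly when $Vw \in G_y$, i.e. exactly when $Vw$ attains this common maximum, i.e. exactly when $w \in F_y$. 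A coordinatewise computation then shows $\sign(F_y) = \sign(c)$: maximizing $\sum_i c_i w_i$ over the box pins $w_i$ to $x_i + \delta$ when $c_i > 0$, to $x_i - \delta$ when $c_i < 0$, and leaves $w_i$ free when $c_i = 0$, which matches the definition of $\sign(F_y)$ coordinate by coordinate. Combining these, $\sign(G_y) = \sign(\pi^{-1}(G_y)) = \sign(F_y) = \sign(V^\top y)$.

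Finally I would assemble the two inclusions. Every $y \in \rr^{m-d}$ is maximized on some face $G_y$ of the polytope $Z$, so $\sign(V^\top y) = \sign(G_y) \in \mathcal{O}_V$, giving $\{\sign(V^\top y)\} \subseteq \mathcal{O}_V$; conversely every face $G$ of $Z$ arises as $G_y$ for some $y$, so $\sign(G) = \sign(V^\top y)$ lies in the row-space set, giving the reverse inclusion. Together with the first step this yields $\mathcal{O}_V = \{\sign(V^\top y) : y \in \rr^{m-d}\} = \mathcal{O}_{\ker V}$. The main obstacle is the face-correspondence identity $\pi^{-1}(G_y) = F_y$, together with the bookkeeping that the sign vector assigned to a face of $Z$ is independent of the particular maximizing functional $y$ chosen; once the support functions of $Z$ and $C_\delta(x)$ are matched through $c = V^\top y$, the coordinatewise sign computation is routine.
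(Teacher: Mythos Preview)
The paper does not actually prove this proposition: it is stated with a citation to \cite{ziegler2000}, Corollary 7.17, and no argument is given. So there is no in-paper proof to compare against.

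Your argument is correct and is essentially the standard proof one finds in Ziegler. The identification $\mathcal{O}_{\ker V} = \{\sign(V^\top y) : y \in \rr^{m-d}\}$ is immediate from $(\ker V)^\perp = \mathrm{row}(V)$, and the heart of the matter is exactly the face correspondence you isolate: for any linear functional $y$ on $\rr^{m-d}$, the face of the zonotope on which $y$ is maximized pulls back (intersected with the cube) to the face of the cube on which $c = V^\top y$ is maximized, because $\langle y, Vw\rangle = \langle c, w\rangle$. The coordinatewise computation $\sign(F_y) = \sign(c)$ is then a direct unpacking of how a linear functional is maximized over a box. Two small remarks: first, when you write $\pi^{-1}(G_y)$ you mean the preimage intersected with the cube (as the paper does, slightly abusively), since the literal preimage is an affine flat; second, the well-definedness issue you flag at the end is a non-issue precisely because the paper defines $\sign(G)$ via $\pi^{-1}(G)$ rather than via a choice of $y$, so no bookkeeping is needed beyond what you already did.
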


Given an oriented matroid $\mathcal{O} \subseteq \{+,-,0\}^m$ and $\sigma \in \mathcal{O}$,
we define the \emph{support of $\sigma$}, denoted $\supp(\sigma)$, to be the set of indices of $\sigma$ that are nonzero.
That is, $\supp(\sigma) := \{i \in \{1,\dots,m\}: \sigma_i \neq 0\}$.
Then the collection of subsets of $\{1,\dots, m\}$
that are supports of elements of $\mathcal{O}$ and minimal with respect to inclusion form
the circuits of a matroid, denoted $\calm_{\mathcal{O}}$.
We call this the \emph{matroid underlying $\mathcal{O}$}.
When $\mathcal{O}$ is associated to a linear space $L$, that is $\mathcal{O} = \mathcal{O}_L$,
we simplify notation and write $\calm_L$ instead of $\calm_{\mathcal{O}_L}$.
For more background on matroids, see \cite{oxley2011}.

\subsection{$l^\infty$-Optimization to Linear Spaces}

In the rest of this section, we will use the language of matroids to state our main results for linear spaces. Before we begin, we establish some notation that will be used throughout the
entire paper.

\begin{defn}
	Let $S \subseteq \rr^m$ be an arbitrary set and let $x,z \in \rr^m$.
	We denote the $l^\infty$-distance from $x$ to $z$ by $d(x,z)$,
	the $l^\infty$-distance from $x$ to $S$ by $d(x,S)$
	and the set of all points in $S$ closest to $x$ by $C(x,S)$.
	That is
	\[
		d(x,z) := \sup_{i} |x_i-z_i| \qquad d(x,S) := \inf_{y \in S} d(x,y) \qquad C(x,S) := \{y \in S : d(x,y) = d(x,S)\}.
	\]
\end{defn}

Note that $C(x,S) = C_{d(x,S)}(x) \cap S$.
Furthermore, when $S$ is a linear space, there exists a unique minimal face $F$ of $C_{d(x,S)}(x)$ that contains $C(x,S)$.
We use the sign vector $\sign(F)$ to give each $x \in \rr^m$ a combinatorial type as in the following definition.

\begin{defn}
	Let $L$ be a linear space and $F$ the minimal face of
	$C_{d(x,L)}(x)$ containing $C(x,L)$.
	The \emph{type of $x$ with respect to $L$} is $\type_L(x) := \sign(F)$.
\end{defn}

\begin{ex}
	Consider linear spaces $L_1 = \{(t,t) \in \rr^2: t \in \rr\}$ and $L_2 = \{(t,0) \in \rr^2 : t \in \rr\}$
	and let $x = (-3,-1)$ and $y = (5,3)$.
	Then $\type_{L_1}(x) = (+,-)$, $\type_{L_1}(y) = (-,+)$,
	$\type_{L_2}(x) = (0,+)$, and $\type_{L_2}(y) = (0,-)$.
	See Figure \ref{fig:typeexample} for an illustration.
	\begin{figure}[h!]
		\includegraphics[width=15cm]{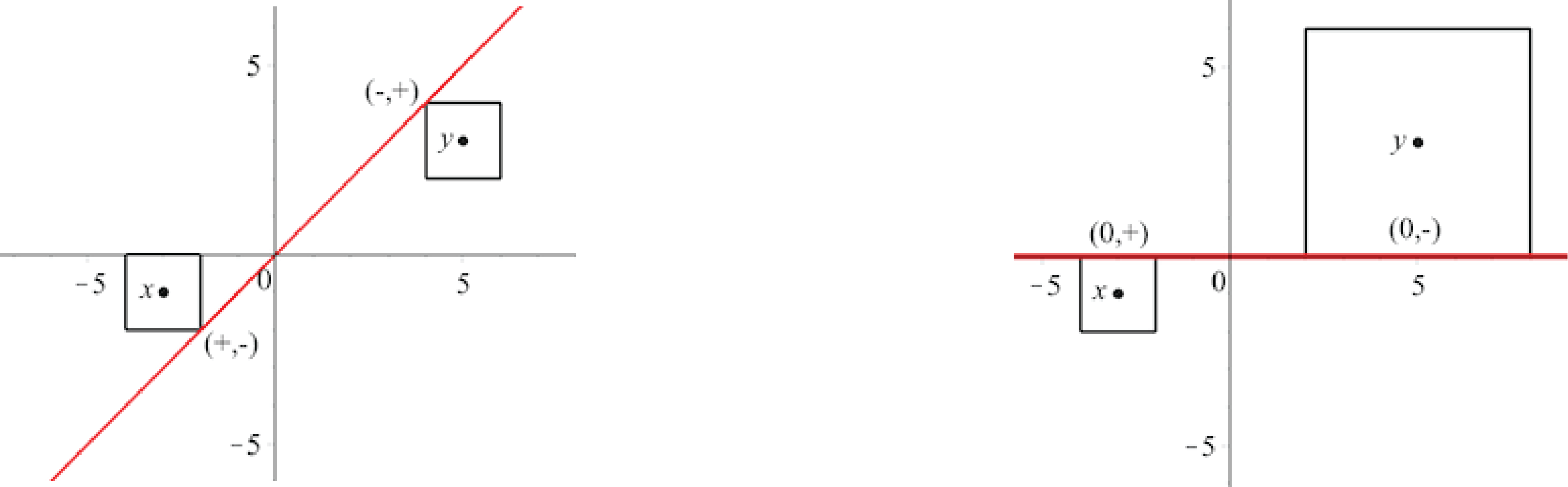}
		\caption{Types of $x$ and $y$ with respect to $L_1$ and $L_2$}\label{fig:typeexample}
	\end{figure}
\end{ex}

We will see that the sign vectors that can arise as the type of a point with respect to $L$
are precisely the elements of the oriented matroid associated to $L$. To aid in the proof we introduce the following convention for generating a vector with a given sign signature.

\begin{defn} 
	For $\sigma \in \{+,-,0\}^m$, $u(\sigma)$ is the vector in
	$\mathbb{R}^m$ with
 	\[
	u(\sigma)_i := 
		\left\{\begin{array}{rl}
			1 & \text{if}\quad\sigma_i = + \\
			-1 & \text{if}\quad\sigma_i = - \\
			0 & \text{if}\quad\sigma_i = 0 \\
 		\end{array}\right..
	\]
\end{defn}

\begin{lemma}\label{possibletypes}
	Let $L \subseteq \mathbb{R}^m$ be a linear space.
	Then the sign vectors that can arise as the type of a point with respect to $L$
	are precisely the elements of the oriented matroid associated to $L$.
	That is,
	\[
		\mathcal{O}_L = \{\type_L(x) : x \in \rr^m\}.
	\]
\end{lemma}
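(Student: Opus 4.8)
The plan is to prove the two inclusions $\mathcal{O}_L \subseteq \{\type_L(x):x\in\rr^m\}$ and $\mathcal{O}_L \supseteq \{\type_L(x):x\in\rr^m\}$ separately, using $l^1$--$l^\infty$ duality for the second and the zonotope description of $\mathcal{O}_L$ (Proposition \ref{linearzonotopes}) for the first.

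For $\{\type_L(x):x\in\rr^m\}\subseteq\mathcal{O}_L$, fix $x$, set $\delta=d(x,L)$, and dispose of the trivial case $x\in L$ (then $\delta=0$, $C(x,L)=\{x\}$, and $\type_L(x)$ is the zero sign vector, which lies in $\mathcal{O}_L$ via the functional $0$). Assuming $\delta>0$, write $L=\ker V$ for a full-row-rank $V\in\rr^{(m-d)\times m}$ with $d=\dim L$, and put $\pi(y)=Vy$, so $\mathcal{O}_L=\mathcal{O}_V$ by Proposition \ref{linearzonotopes} and $C(x,L)=\pi^{-1}(0)\cap C_\delta(x)$. Two facts drive the argument: (i) $C(x,L)\neq\emptyset$ since the infimum defining $d(x,L)$ is attained, so $0\in Z:=\pi(C_\delta(x))$; and (ii) $0$ lies on the boundary of $Z$, because $\pi(C_{\delta'}(x))=Vx+\tfrac{\delta'}{\delta}(Z-Vx)$ and $d(x,L)=\delta$ forces $0\notin\pi(C_{\delta'}(x))$ for every $\delta'<\delta$. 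Let $G$ be the face of $Z$ whose relative interior contains $0$ and let $F'=\pi^{-1}(G)\cap C_\delta(x)$, a face of the cube. I would then identify $F'$ with the minimal face $F$ of $C_\delta(x)$ containing $C(x,L)$: the inclusion $C(x,L)\subseteq F'$ is immediate from $0\in G$, and conversely $\pi$ restricts to an affine surjection $F'\to G$ carrying $\relint(F')$ onto $\relint(G)\ni 0$ (a standard property of affine surjections of polytopes, or a short convex-combination argument), so $C(x,L)=\pi^{-1}(0)\cap F'$ meets $\relint(F')$ and hence is contained in no proper face of $F'$. Therefore $\type_L(x)=\sign(F)=\sign(F')=\sign(G)\in\mathcal{O}_V=\mathcal{O}_L$.

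For the reverse inclusion, take $s=\sign(c)\in\mathcal{O}_L$ with $c\in(\rr^m)^*$ vanishing on $L$ and exhibit the explicit point $x:=-u(s)$. If $s$ is the zero vector then $x=0\in L$ and we are done; otherwise $c\neq 0$ and $\|x\|_\infty=1$. Since $\langle c,u(s)\rangle=\sum_i|c_i|=\|c\|_1$ and $\langle c,\cdot\rangle$ is constant on cosets of $L$, we get $d(x,L)\geq\langle -c,x\rangle/\|c\|_1=1$, while $d(x,L)\leq\|x-0\|_\infty=1$; hence $d(x,L)=1$ and $0\in C(x,L)$. For any $y\in C(x,L)$ there is equality in $\langle -c,x-y\rangle=\|-c\|_1\|x-y\|_\infty$ (both sides are $\|c\|_1$), so the equality conditions in H\"older's inequality force, for every $i\in\supp(s)$, that $y_i=x_i+u(s)_i=0$, i.e. $y_i$ sits on the $s_i$-extreme facet of $C_1(x)$. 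Meanwhile, for $i\notin\supp(s)$ we have $x_i=0$, and the point $0\in C(x,L)$ has $i$th coordinate $0\notin\{x_i-1,x_i+1\}$, so coordinate $i$ is not pinned. Reading off $\sign(F)$ for the minimal face $F$ of $C_1(x)$ containing $C(x,L)$ then gives $\sign(F)=s$, that is, $\type_L(x)=s$.

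I expect the main obstacle to be the identification $F=F'$ in the first inclusion. The bare duality computation (the one used for the second inclusion, run in reverse) only shows that the sign vector of some dual-optimal functional is $\preceq\type_L(x)$; to pin $\type_L(x)$ down exactly one really needs the zonotope picture together with the fact that fibers of $\pi$ over the \emph{relative interior} of a face of $Z$ meet the relative interior of the corresponding face of the cube. The remaining pieces --- the H\"older equality analysis, the boundary claim $0\in\partial Z$, and the bookkeeping with sign vectors of cube faces --- are routine.
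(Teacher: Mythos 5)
Your proof is correct and follows essentially the same route as the paper: the same explicit witness $-u(\sigma)$ (your H\"older-equality analysis is just a repackaging of the paper's sign argument with the vanishing functional $c$), and the same zonotope reduction via Proposition~\ref{linearzonotopes} for the reverse inclusion, differing only in that you identify the minimal face $F$ as the preimage of the face of $Z$ whose relative interior contains $0$, where the paper produces a face-defining functional for $F$ via a separating hyperplane. Both executions are sound.
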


\begin{proof}  
	First, we will show that $\mathcal{O}_L \subseteq 
	\{\type_L(x) : x \in \rr^m\}$. 
	Let $\sigma \in \mathcal{O}_L$, we will show that the 
	type of $-u(\sigma)$ with respect to $L$ is equal 
	to $\sigma$. Since $\sigma \in \mathcal{O}_L$, by the definition of $\mathcal{O}_L$, there must exist a linear functional $c \in (\mathbb{R}^m)^*$ that 	vanishes on $L$ with $\sign(c) = \sigma$. 
	Now we claim that $d(-u(\sigma),L) = 1$.
	Since ${\bf{0}} \in L$
	and $d(-u(\sigma),{\bf{0}}) = 1$, $d(-u(\sigma),L) \le 1$.
	If $x \in \rr^m$ such that $d(-u(\sigma),x) < 1$,
	then for each index $i$ with $\sigma_i \neq 0$, $\sign(x_i) = -\sigma_i$.
	Therefore, $cx < 0$ which implies $x \notin L$. 
	Thus, $d(x,L) = 1$.
	\\
	\indent
	Next, we claim that $\type_L(-u(\sigma)) = \sigma$.
	Observe that
	$$F = \{y \in C_1(-u(\sigma)): y_i = 0 \text{ whenever } 
	\sigma_i \neq 0\}$$ 
	is a face of $C_1(-u(\sigma))$ and that $\sign(F) = \sigma$.
	Therefore, it will suffice to show that $F$ is the minimal face
	of $C_1(-u(\sigma))$ containing $C(-u(\sigma),L)$.
	So let $x \in C(-u(\sigma),L)$. We have already shown that 
	this implies that $x \in C_1(-u(\sigma))$. Moreover, for all $i$,  
	either $x_i= 0$ or 
	$\sign(x_i)	= \sign(-u(\sigma)_i) = -\sigma_i$. 	
	It must be the case then that if 
	$\sigma_i \not = 0$ then $x_i = 0$. Otherwise, $cx < 0$,
	which is impossible, since $c$ vanishes on $L$.
	Therefore, $x \in F$, and so $F$ contains $C(-u(\sigma),L)$.
	Finally, all that remains to show is that $F$ is the 	
	\emph{minimal} face of $C_1(-u(\sigma))$ containing 
	$C(-u(\sigma),L)$. If not, then there must exist  
	$j$ with $\sigma_j = 0$ such that $C(-u(\sigma),L)$ is 
	contained in a 
 	facet of $C_1(-u(\sigma))$ of the form 
	$\{y \in C_1(-u(\sigma)): y_j = 1\}$ or 
	$\{y \in C_1(-u(\sigma)): y_j = -1\}$. But this is impossible, 
	since we have already shown that 
	${\bf{0}} \in C(-u(\sigma),L)$. 
	Hence, 
 	$\type_L(-u(\sigma)) = \sigma$.
	\\
	\indent
	We now show $\{\type_L(x) : x \in \rr^m\} \subseteq \mathcal{O}_L$.
	Let $x \in \rr^m$.
	We will show that $\type_L(x) \in\mathcal{O}_L$.
	Assume $L$ has dimension $d$ and let $V \in \rr^{(m - d)
	\times m}$ be a matrix whose rows form a basis for 
	$L^\perp$.
	Let $\pi: \rr^m \rightarrow \rr^{m - d}$ be the map $x \mapsto 
	Vx$.
	Let $F$ be the minimal face of $C_{d(x,L)}(x)$ that contains 
	$C(x,L)$ so that $\type_L(x) = \sign(F)$.
	The image of $C_{d(x,L)}(x)$ under $\pi$ is the zonotope 
	associated to $V$ in $\mathbb{R}^{m - d}$. 
	Our goal will be to show
	that $F$ is the inverse image of one of the faces
	of this zonotope. By the definition of the oriented matroid 
	associated to a zonotope, this implies that $\sign(F)$ is an
	element of $\mathcal{O}_V$. 
	Proposition \ref{linearzonotopes} shows that
	the oriented matroids $\mathcal{O}_V$ and 
	 $\mathcal{O}_{\ker V}$ are equal.
	 Since $V$ is specifically 
	 constructed so that $\ker V = L$, this will also imply that 
	 $\type_L(x) = \sign(F) \in \mathcal{O}_L$.

	By the hyperplane separation theorem, there exists a 
	hyperplane separating $L$ and the interior of 
	$C_{d(x,L)}(x)$. Observe that any such hyperplane 
	must contain $L$ and intersect $F$ in its interior.
	Therefore, we may choose 
	$c \in (\rr^m)^*$ in the row-span of $V$ such that the 
	hyperplane $\mathcal{H}_c = \{y \in \rr^m : cy=0\}$
	is a face-defining hyperplane for $F$ and
	write $c = bV$ for some $b \in (\rr^{m - d})^*$.

	Since $\mathcal{H}_c$ is a face defining hyperplane of 	$C_{d(x,L)}(x)$,
	$\mathcal{H}_b = \{z \in \rr^{m - d} : bz=0\}$ must be a face-defining hyperplane 	of $\pi(C_{d(x,L)}(x))$. Clearly, $\pi(F)$ is contained in the face 
	of $\pi(C_{d(x,L)}(x))$ defined by $\mathcal{H}_b$.
	In fact, we have equality. If $z \in \pi(C_{d(x,L)}(x))$ then 
	$z= Vy$ for some $y \in C_{d(x,L)}(x)$. So if $bz = 0$, then
	$bz = (bV)y = cy = 0$. This implies that $y \in F$ and so
	 $z \in \pi(F)$. Similarly,
	if  $\pi(y) \in \pi(F)$ then 
	$0 = b(\pi(y)) = b(Vy) = (bV)(y) = cy$, which implies $y \in F$.
	Thus, we have just shown that $\pi(F)$ is a face of the 
	zonotope and that $\pi\inv(\pi(F)) = F$. Therefore, $\pi(F)$
	inherits its sign from $F$, and so 
	$\sign(F) \in \mathcal{O}_V$. As noted, this implies
	that $\type_L(x) = \sign(F) \in \mathcal{O}_L$.
\end{proof}

As we show in the following theorem,
the dimension of $C(x,L)$ depends entirely on $\type_L(x)$.
For any $\sigma \in \mathcal{O}_L$,
the \emph{rank of $\sigma$ in $\mathcal{O}_L$}, denoted $\rank(\sigma)$,
is the rank of the support of $\sigma$ in the matroid underlying $\mathcal{O}_L$.
So $\rank(\sigma)$ is the smallest number $k$ such that there exists indices $i_1,\dots,i_k$ with $\sigma_{i_j} \neq 0$
such that for all $y \in L$, if $y_{i_1} = \dots = y_{i_k} = 0$, then $y_j = 0$ for $\sigma_j \neq 0$.

\begin{thm}
\label{dimoftype}
	Let $L\subset \rr^m$ be a linear space of dimension $d$
	and let $\sigma \in \mathcal{O}_L$ be a sign vector in the oriented matroid associated to $L$.
	If $x \in \rr^m$ has $\type_L(x) = \sigma$,
	then the collection of $l^\infty$-closest points to $x$ in $L$ has dimension $d- \rank(\sigma)$.
\end{thm}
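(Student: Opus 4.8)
The plan is to work with the picture from Lemma~\ref{possibletypes}: let $r = d(x,L)$ and let $F$ be the minimal face of $C_r(x)$ containing $C(x,L)$, so $\sigma = \sign(F)$ and $\supp(\sigma)$ is exactly the set of coordinates on which $F$ is ``pinned'' to $x_i \pm r$. The key observation is that $C(x,L) = C_r(x) \cap L = F \cap L$, where the second equality holds because $C(x,L)$ lies in $F$ and, conversely, any point of $F \cap L$ is at $l^\infty$-distance $\le r$ from $x$ hence at distance exactly $r$. So I want to compute $\dim(F \cap L)$.

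First I would describe $F$ affinely. Writing $I = \supp(\sigma)$, the affine hull of $F$ is $\{y : y_i = x_i + r\sigma_i \text{ for } i \in I\}$, an affine subspace of codimension $|I|$ whose direction space is $W := \{w \in \rr^m : w_i = 0 \text{ for } i \in I\}$. Then $C(x,L) = F \cap L$ is an affine subspace (nonempty, as it contains some closest point) whose direction space is $L \cap W$. Hence $\dim C(x,L) = \dim(L \cap W)$, and the whole theorem reduces to showing $\dim(L \cap W) = d - \rank(\sigma)$, i.e. that restricting to the coordinate subspace $W$ cuts the dimension of $L$ by exactly $\rank(\sigma)$, the rank of $I$ in $\calm_L$.

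This last equality is a standard fact about realized matroids, which I would prove directly. Let $A$ be a matrix whose rows span $L$ (so $L$ is the row space and $\calm_L$ is the column matroid of $A$, since a circuit of $\calm_{\mathcal O_L}$ is a minimal dependent set of columns). Then $L \cap W$ is the span of those rows — more precisely, $\dim(L \cap W) = d - \rank(A_I)$ where $A_I$ is the submatrix of columns indexed by $I$: indeed the map $L \to \rr^{I}$ given by projection onto the $I$-coordinates has kernel $L \cap W$ and image of dimension $\rank(A_I)$, and $\rank(A_I)$ is by definition the rank of $I$ in the column matroid, i.e. $\rank(\sigma)$. Combining, $\dim C(x,L) = \dim(L \cap W) = d - \rank(A_I) = d - \rank(\sigma)$.

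The main obstacle — really the only nontrivial point — is justifying $C(x,L) = F \cap L$ rather than merely $C(x,L) \subseteq F \cap L$: one must check that every point of $F \cap L$ is actually a \emph{closest} point, not just a point of $L$ within the cube. This follows because $F \subseteq C_r(x)$ forces $d(x,y) \le r = d(x,L)$ for $y \in F \cap L$, with equality by definition of $d(x,L)$; I would state this carefully. A secondary point worth a sentence is confirming $F \cap L$ is genuinely an affine subspace with the claimed direction space (intersection of an affine subspace with a linear subspace), which is routine linear algebra. Everything else is bookkeeping with the definition of $\rank(\sigma)$ already spelled out in the paragraph preceding the theorem.
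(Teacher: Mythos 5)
Your route is essentially the paper's: identify $C(x,L)$ with $F\cap L$, reduce the dimension count to $\dim(L\cap W)$ (the paper calls this space $L(\sigma)$), and then show $\dim(L\cap W)=d-\rank(\sigma)$. The final step, via the column matroid of a matrix whose rows span $L$, is correct and in fact more explicit than the paper's. The gap is in the middle. $F$ is a face of the cube $C_r(x)$, hence a bounded polytope, \emph{not} an affine subspace; so $F\cap L$ is a polytope sitting inside the affine subspace $\textnormal{aff}(F)\cap L$, and the only thing that is ``routine linear algebra'' is the upper bound $\dim C(x,L)=\dim(F\cap L)\le \dim(L\cap W)$. The lower bound is where the minimality of $F$ --- a hypothesis your argument never uses after the setup --- must enter. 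To see that something is genuinely missing: for \emph{any} face $F'$ of $C_r(x)$ containing $C(x,L)$ one also has $F'\cap L=C(x,L)$, since $F'\cap L\subseteq C_r(x)\cap L=C(x,L)$. If your step were valid as stated it would give $\dim C(x,L)=\dim(L\cap W')$ for every such $F'$; taking $F'=C_r(x)$ itself yields the false conclusion $\dim C(x,L)=d$ for every $x\notin L$.

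The repair is exactly the paper's one sentence of real content. Because $F$ is the \emph{minimal} face of $C_r(x)$ containing the convex set $C(x,L)$, some point $y\in C(x,L)$ lies in the relative interior of $F$ (a convex subset of a polytope that meets no proper face of $F$ must meet $\relint(F)$). Then for any $z\in L\cap W$ and sufficiently small $\varepsilon>0$, the point $y+\varepsilon z$ has the same $I$-coordinates as $y$, remains in $\relint(F)$, and lies in $L$, hence lies in $F\cap L=C(x,L)$. This gives $\dim C(x,L)\ge \dim(L\cap W)$ and, combined with your upper bound and your matroid computation, completes the proof.
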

\begin{proof}
	Let $L(\sigma)$ denote the linear space obtained by intersecting $L$
	and the $|\sigma|$ hyperplanes $\{x\in \rr^m: x_i = 0\}$ for $\sigma_i \neq 0$. 
	We claim that if $x \in \rr^m$ with $\type_L(x) = \sigma$, then $\dim C(x,L) = \dim L(\sigma)$.
	\\
	\indent
	Suppose $\type_L(x) = \sigma$, and let $F$ be the minimal
	face of $C_{d(x,L)}(x)$ containing $C(x,L)$.
	Let $y$ be a point in $C(x,L)$ that is also in the
	interior of $F$. Then given any point $z \in L(\sigma)$, it is 
	possible to choose $\varepsilon$ so that 
	$y + \varepsilon z \in F$ and hence in $C(x,L)$. 
	Therefore, $\dim C(x,L) \ge \dim L(\sigma)$. 
	Moreover, any two points in $C(x,L)$ are 
	contained in $F \cap L$ and so differ only by an 
	element of $L(\sigma)$. Therefore, 
 	$\dim C(x,L) \le \dim L(\sigma)$.
	\\
	\indent
	We now show that $\dim(L(\sigma)) = d-\rank(\sigma)$.
	Let $k := \rank(\sigma)$ and let $i_1,\dots,i_k$ be indices such that for all $y \in L$,
	$y_{i_1} = \dots = y_{i_k} = 0$ implies $y_j = 0$ when $\sigma_j \neq 0$.
	So $L(\sigma)$ can be expressed as the intersection of $L$ with the hyperplanes $\{x \in \rr^m : x_{i_j} = 0\}$,
	and by minimality of rank, this is not true of any subset of these hyperplanes.
	So $\dim(L(\sigma)) = d - \rank(\sigma)$.
\end{proof}

\begin{ex}
	Let $L := \{(t,t,0) \in \rr^3 : t \in \rr\}$.
	Consider the points $x = (0,0,-1)$ and $y = (6,4,0)$.
	Then $\type_L(x) = (0,0,+)$ and $\type_L(y) = (-,+,0)$.
	Since $\rank(0,0,-) = 0$ and $d = 1$,
	Theorem \ref{dimoftype} tells us that $\dim(C(x,L)) = 1$.
	Since $\rank(+,-,0) = 1$,
	Theorem \ref{dimoftype} tells us that $\dim(C(y,L)) = 0$.
	Figure \ref{fig:typedim} shows $x$ and $y$ each surrounded by a cube of side length $2$ (colored red and light blue, respectively).
	The intersections with $L$ are $C(x,L)$ and $C(y,L)$.
	\begin{figure}[h!]
		\includegraphics[width=10cm]{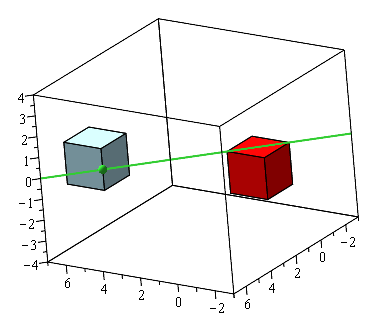}
		\caption{$L$ and cubes around $(0,0,-1)$ and $(6,4,0)$.}\label{fig:typedim}
	\end{figure}
\end{ex}

We can use the structure of the matroid $\mathcal{M}_L$ to glean information about possible values of $\dim(C(x,L))$.
Let $U_{d,m}$ denote the uniform matroid of rank $d$ on ground set $\{1,\dots,m\}$;
that is, the circuits of $U_{d,m}$ are all $d+1$-element subsets of $\{1,\dots,m\}$.

\linearunique
\begin{proof}
	Let $d$ be the dimension of $L$.
	If $\mathcal{M}_L$ is \emph{not} uniform,
	then $\mathcal{O}_L$ has a circuit $\sigma$ with $|\sigma| \le d$, so $\rank(\sigma) \le d-1$.
	Then Lemma \ref{possibletypes} and Theorem \ref{dimoftype} imply that there exists a point
	 $x \in \rr^m$ such that $\dim C(x,L) = d-\rank(\sigma)\ge 1$.
	 \\
	 \indent
	 If $\mathcal{M}_L = U_{d,m}$ then $\rank(\sigma) = d$ for all $\sigma \in \mathcal{O}_L$.
	 Theorem \ref{dimoftype} implies that $\dim(C(x,L)) = 0$ for all $x \in \rr^m$.
\end{proof}

Lemma \ref{possibletypes} enables us to give a partition of $\mathbb{R}^m$ by type with respect to $L$. 

\begin{prop} \label{type space}
	Let $L \subseteq \rr^m$ be a linear space and let $\sigma \in \mathcal{O}_L$ be a sign vector in the oriented matroid associated to $L$.	
	The set of all points in $\mathbb{R}^m$
	 with type $\sigma$ with respect to $L$ is the Minkowski
	 sum of $L$ and the interior of the conical hull of 
	 $\{ -u(\tau) : \sigma \preceq \tau \}$. That is,
	 
	 $$
	 \{x \in \mathbb{R}^m : \type_L(x) = \sigma\} = 
	 L + \textnormal{int}(\cone(\{ -u(\tau) : \sigma \preceq \tau \})).	$$
\end{prop}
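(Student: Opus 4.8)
The plan is to prove the two inclusions directly, after two preliminary reductions. First, $\type_L$ is invariant under translation by $L$: for $\ell\in L$, translating by $\ell$ carries $C_{d(x,L)}(x)$, $C(x,L)$, and the minimal face $F$ of the former containing the latter to their translates, and translation does not change the sign vector of a face, so $\type_L(x+\ell)=\type_L(x)$; thus it suffices to describe the set of $x$ of type $\sigma$ modulo $L$. Second, I would record an explicit description of the cone $K_\sigma:=\cone(\{-u(\tau):\sigma\preceq\tau\})$. Every $\tau$ with $\sigma\preceq\tau$ agrees with $\sigma$ on $\supp(\sigma)$, so each generator $-u(\tau)$ equals $-u(\sigma)$ plus a vector supported off $\supp(\sigma)$ with entries in $\{-1,0,1\}$; a direct computation then shows $v\in K_\sigma$ iff there is a scalar $\Lambda\ge0$ with $v_i=-\sigma_i\Lambda$ for all $i\in\supp(\sigma)$ and $|v_j|\le\Lambda$ for all $j\notin\supp(\sigma)$, and $v$ lies in the relative interior of $K_\sigma$ — which is how $\textnormal{int}$ must be read here, since $K_\sigma$ is not full-dimensional once $|\sigma|\ge2$ — iff moreover $\Lambda>0$ and $|v_j|<\Lambda$ for all $j\notin\supp(\sigma)$. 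Since $\sigma\in\mathcal{O}_L$ I also fix a linear functional $c$ vanishing on $L$ with $\sign(c)=\sigma$; then $c$ is supported on $\supp(\sigma)$ with $c_i\sigma_i=|c_i|$, so $c\cdot v=-\Lambda\|c\|_1$ for $v$ as above.

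For the inclusion $L+\textnormal{int}(K_\sigma)\subseteq\{x:\type_L(x)=\sigma\}$, I would write $x=\ell_0+v$ with $\ell_0\in L$ and $v\in\relint(K_\sigma)$, and by the translation reduction it suffices to show $\type_L(v)=\sigma$. Here $\|v\|_\infty=\Lambda$, so $d(v,L)\le\Lambda$; equality holds because any $\ell\in L$ with $d(v,\ell)<\Lambda$ would give $|c\cdot(v-\ell)|\le\|c\|_1\|v-\ell\|_\infty<\Lambda\|c\|_1=|c\cdot v|$, contradicting $c\cdot(v-\ell)=c\cdot v$. Hence $d(v,L)=\Lambda$ and $0\in C(v,L)$. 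In the cube $C_\Lambda(v)$, the origin lies in the relative interior of the face $F_\sigma:=\{y\in C_\Lambda(v):y_i=v_i+\sigma_i\Lambda \text{ for } i\in\supp(\sigma)\}$, and $\sign(F_\sigma)=\sigma$. So the minimal face $F$ of $C_\Lambda(v)$ containing $C(v,L)$ contains $F_\sigma$; moreover $F=F_\sigma$, since if some $i_0\in\supp(\sigma)$ were free in $F$ there would be $\ell\in C(v,L)$ with $\sigma_{i_0}(v-\ell)_{i_0}>-\Lambda$, giving $c\cdot(v-\ell)=\sum_{i\in\supp(\sigma)}c_i(v-\ell)_i>-\Lambda\|c\|_1=c\cdot v$, which is impossible. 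Therefore $\type_L(v)=\sign(F_\sigma)=\sigma$.

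For the reverse inclusion, I would assume $\type_L(x)=\sigma$ and put $\delta:=d(x,L)$, which is positive (otherwise $x\in L$, forcing $\type_L(x)$ to be the zero sign vector). With $F$ the minimal face of $C_\delta(x)$ containing $C(x,L)=C_\delta(x)\cap L$ we have $\sign(F)=\sigma$, so every $y\in C(x,L)$ satisfies $(x-y)_i=-\sigma_i\delta$ for $i\in\supp(\sigma)$. The point to handle is the coordinates $j\notin\supp(\sigma)$: an arbitrary closest point $y$ may have $|(x-y)_j|=\delta$, so $x-y$ need not be in $\relint(K_\sigma)$; I need a closest point lying in the relative interior of $F$. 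To build one I would use that $\sign(F)_j=0$ forces $C(x,L)$ to lie in neither facet $\{y_j=x_j+\delta\}$ nor $\{y_j=x_j-\delta\}$, yielding $y^{(j,+)},y^{(j,-)}\in C(x,L)$ with $(x-y^{(j,+)})_j>-\delta$ and $(x-y^{(j,-)})_j<\delta$; averaging all of these points over $j\notin\supp(\sigma)$ gives, by convexity of $C(x,L)$, a closest point $y^*$ with $(x-y^*)_i=-\sigma_i\delta$ for $i\in\supp(\sigma)$ and $|(x-y^*)_j|<\delta$ for $j\notin\supp(\sigma)$. By the description of $K_\sigma$ with $\Lambda=\delta$, this says $v:=x-y^*\in\relint(K_\sigma)$, so $x=y^*+v\in L+\textnormal{int}(K_\sigma)$. (When $\supp(\sigma)=\{1,\dots,m\}$ no averaging is needed and any closest point works as $y^*$.)

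I expect the reverse inclusion to be the main obstacle, precisely because one cannot use an arbitrary closest point but must produce one whose difference with $x$ is strictly interior to $K_\sigma$ in the coordinates outside $\supp(\sigma)$; the mechanism making this work is the characterization of $\sign(F)_j=0$ as ``$C(x,L)$ escapes both $j$-facets'', which powers the averaging argument. The remaining pieces — the translation reduction, the cone computation, the upper bounds on $d(\cdot,L)$, and the computations with the functional $c$ — are routine. Note the entire argument uses $\supp(\sigma)\ne\emptyset$.
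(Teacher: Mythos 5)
Your proof is correct and follows essentially the same route as the paper's: reduce modulo $L$, identify $\cone(\{-u(\tau):\sigma\preceq\tau\})$ as the cone over the cube face with sign $\sigma$, use a functional $c$ with $\sign(c)=\sigma$ vanishing on $L$ to force the coordinates in $\supp(\sigma)$ for one inclusion, and use the minimal-face characterization of type for the other. Your averaging construction of a closest point $y^*$ with $x-y^*$ in the relative interior of the cone makes explicit a step the paper's proof leaves implicit, and both of your side remarks are accurate: ``interior'' must be read as relative interior once $|\sigma|\ge 2$, and the argument (indeed the statement itself) requires $\sigma\neq 0$, since for $\sigma=0$ the left-hand side is $L$ while the right-hand side is all of $\rr^m$.
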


\begin{proof}
	Let $\sigma \in \mathcal{O}_L$.
	Define $\mathcal{V}_\sigma := \{ -u(\tau) : \sigma \preceq \tau \}$. 
	First, we will show that everything in 
	$L +\textnormal{int}(\cone(\mathcal{V}_\sigma))$ has type $\sigma$. 
	Since adding an element of $L$ 
	 to a point does not change its type with respect to
	 $L$, it will suffice to show everything in 
	 $\textnormal{int}(\cone(\mathcal{V}_\sigma))$ has type $\sigma$. 
	 \\
	 \indent
	 Let $x \in \textnormal{int}(\cone(\mathcal{V}_\sigma))$. 
	 Then there exists 
	$\alpha > 0 $ such that if 
	$\sigma_i = +$ or $\sigma_i = -$,
	 then $|x_i| = \alpha$  and
	$|x_i| < \alpha$ otherwise.
	By Lemma \ref{possibletypes}, there exists 
	$c \in (\mathbb{R}^m)^*$ such that $\sign(c) = \sigma$
	and $cy = 0$ for all $y \in L$.
	Let $\mathcal{H}_c := \{y \in \rr^m : cy = 0\}$ be the hyperplane defined by $c$.
	It is clear that $d(x,\mathcal{H}_c) = \alpha$, and
	any $y \in C(x,\mathcal{H}_c)$ must have $y_i = 0$ if 
	$|x_i| = \alpha$. 
	Since $L \subseteq \mathcal{H}_c$, the same is true for
	each $y \in C(x,L)$. Therefore, if 
	$\sigma_i = +$ or $\sigma_i = -$, then
	$\sigma_i =\type_L(x)_i$.
	\\
	\indent
	Sine $d(x,\mathcal{H}_c) = \alpha$ and $L \subseteq \mathcal{H}_c$, 
	$d(x,L) \geq \alpha$.
	Since $d(x,{\bf{0}}) = \alpha$ and ${\bf{0}} \in L$, this implies $d(x,L) = \alpha$ and ${\bf{0}} \in C(x,L)$.
	 If $\sigma_i = 0$ then $|x_i - 0| = |x_i| < \alpha$, which
	 implies that if $\sigma_i = 0$, $\type_L(x)_i = 0$. Thus, 
	 $\type_L(x) = \sigma$. 
	 \\
	 \indent
	 To see that everything of type $\sigma$ is contained in $L + \textnormal{int}(\cone(\mathcal{V}_\sigma))$,
	 let $x$ be such that $\type_L(x) = \sigma$.
	 By definition of type, this means that $\sigma$ is the unique sign vector such that
	 if $F$ is the face of the unit cube $C_{1}({\bf 0})$ with type $\sigma$,
	 then there exists some $y \in \textnormal{int}(F)$ such that $x + \lambda y \in L$ for some $\lambda > 0$.
	 So there exists some $l \in L$ such that $x = l+\lambda(-y)$
	 thus showing that $x \in L + \textnormal{int}(\cone(\mathcal{V}_\sigma))$.
\end{proof}

Modulo the lineality space $L$, the closures of the cones in Proposition \ref{type space}
form the face fan of the zonotope obtained by projecting the cube $C_{d(x,L)}(x)$ onto $L^\perp$.

\begin{cor}
	Let $x \in \rr^m$ and let $V$ be a matrix whose rows span $L^\perp$.
	Then $\type_L(x)$ is
	equal to the sign of the unique face $F$ of $Z(V)$
	such that the conic hull of the interior of $F$ contains $Vx$.
\end{cor}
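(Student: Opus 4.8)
The plan is to read this corollary off of Proposition~\ref{type space} by transporting the description given there through the projection $\pi\colon\rr^m\to\rr^{m-d}$, $\pi(y)=Vy$, where $d=\dim L$. Fix $x\in\rr^m$ and set $\sigma:=\type_L(x)$. By Proposition~\ref{type space}, $x$ lies in $L+\textnormal{int}(\cone(\mathcal{V}_\sigma))$ with $\mathcal{V}_\sigma=\{-u(\tau):\sigma\preceq\tau\}$, and this Minkowski sum is exactly the locus of points of type $\sigma$. Since the rows of $V$ form a basis of $L^\perp$, we have $\ker\pi=L$ and $\pi$ restricts to an isomorphism $L^\perp\to\rr^{m-d}$; consequently the full $\pi$-preimage of $\pi\bigl(\textnormal{int}(\cone(\mathcal{V}_\sigma))\bigr)$ is precisely $L+\textnormal{int}(\cone(\mathcal{V}_\sigma))$. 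Hence $\type_L(x)=\sigma$ if and only if $Vx\in\pi\bigl(\textnormal{int}(\cone(\mathcal{V}_\sigma))\bigr)$, and it remains to identify this image with the cone over the relative interior of a single face of $Z(V)$.

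For that identification I would first observe that $\conv(\mathcal{V}_\sigma)$ is the face $G$ of the unit cube $C_1(\mathbf{0})$ cut out by the equations $y_i=-u(\sigma)_i$ over the indices $i$ with $\sigma_i\neq 0$: every $-u(\tau)\in\mathcal{V}_\sigma$ satisfies these equations and lies in $C_1(\mathbf{0})$, while the vectors $-u(\tau)$ with $\tau$ a full-support refinement of $\sigma$ are exactly the vertices of $G$. Therefore $\cone(\mathcal{V}_\sigma)=\cone(G)$, and away from the apex $\textnormal{int}(\cone(\mathcal{V}_\sigma))=\{\lambda w:\lambda>0,\ w\in\relint(G)\}$. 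Next, $\sign(G)\in\{\sigma,-\sigma\}$, so $\sign(G)$ lies in $\mathcal{O}_V=\mathcal{O}_L$ by Lemma~\ref{possibletypes} and Proposition~\ref{linearzonotopes} together with the symmetry $\mathcal{O}_L=-\mathcal{O}_L$; hence there is a linear form $c$ in the row span of $V$ that is face-defining for $G$, and the argument in the second half of the proof of Lemma~\ref{possibletypes} then applies to give $\pi^{-1}(\pi(G))=G$. Thus $\pi(G)$ is an honest face of $Z(V)$ and inherits the sign of $G$. Because a surjective linear map commutes with conical hulls and with relative interiors, we obtain $\pi\bigl(\textnormal{int}(\cone(\mathcal{V}_\sigma))\bigr)=\cone\bigl(\relint(\pi(G))\bigr)$.

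Finally I would appeal to the remark preceding the corollary: modulo $L$, the cones $\textnormal{int}(\cone(\mathcal{V}_\tau))$ are the relatively open cones of the face fan of $Z(V)$ (recentered at the origin), so the sets $\cone(\relint H)$, as $H$ ranges over the proper faces of $Z(V)$ together with the apex $\{\mathbf{0}\}$, partition $\rr^{m-d}$. Hence $Vx$ lies in exactly one of them, which determines $F$ uniquely, and the chain of equivalences above identifies $F$ with the face $\pi(G)$ produced in the previous step. A direct comparison of the sign conventions --- the cube $C_{d(x,L)}(x)$ underlying $Z(V)$ versus the normalized cube $C_1(\mathbf{0})$ used in Proposition~\ref{type space} --- then shows that the sign vector of this face $F$ is $\type_L(x)$, which is the assertion of the corollary; the degenerate case $x\in L$ corresponds to $F$ being the empty face, with sign $\mathbf{0}=\type_L(x)$.

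The step I expect to demand the most care is the sign bookkeeping in this last paragraph. All of the structural inputs --- that $\ker\pi=L$, that $\pi$ intertwines $\cone$ and $\relint$, that $\pi^{-1}(\pi(G))=G$ whenever $\sign(G)\in\mathcal{O}_V$, and that the face fan of $Z(V)$ partitions $\rr^{m-d}$ --- are either immediate or have already been established, so the only genuine subtlety is keeping the orientations of the cube faces, the zonotope faces, and the generating vectors $-u(\tau)$ consistent throughout.
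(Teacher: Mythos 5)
The paper states this corollary without proof, as an immediate consequence of Proposition~\ref{type space} and the remark preceding it, so your route --- push the description of $\{x:\type_L(x)=\sigma\}$ through $\pi$ and match it with the face fan of $Z(V)$ --- is certainly the intended one, and the structural parts of your argument (that $\pi^{-1}(\pi(S))=S+L$, that $\conv(\mathcal{V}_\sigma)$ is a face $G$ of $C_1(\mathbf{0})$ with $\cone(\mathcal{V}_\sigma)=\cone(G)$, that $\pi(G)$ is a face of $Z(V)$ with $\pi^{-1}(\pi(G))=G$, and that the open cones over the proper faces of $Z(V)$ partition $\rr^{m-d}\setminus\{\mathbf{0}\}$) are all sound.

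The gap is exactly the step you defer: the ``direct comparison of sign conventions'' does not come out the way you claim, and your own intermediate computation already shows this. You correctly identify $G$ as the face of $C_1(\mathbf{0})$ cut out by $y_i=-u(\sigma)_i$ for $\sigma_i\neq 0$; under the paper's convention ($\sign(F)_i=+$ exactly when $y_i=x_i+\delta$ on $F$, here $y_i=+1$) this gives $\sign(G)=-\sigma$, not a choice between $\pm\sigma$. Since the sign of a zonotope face is \emph{defined} as the sign of its cube preimage, the face $F=\pi(G)$ whose open cone contains $Vx$ satisfies $\sign(F)=-\type_L(x)$, which contradicts the conclusion you assert. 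The paper's own example confirms this: for $L_1=\{(t,t)\}$ and $x=(-3,-1)$ we have $\type_{L_1}(x)=(+,-)$; taking $V=(1,-1)$ gives $Z(V)=[-2,2]$ and $Vx=-2$, which lies in the cone over the vertex $\{-2\}$, whose cube preimage is $(-1,1)$ with sign $(-,+)=-\type_{L_1}(x)$. So there is no compensating sign flip to be found under the stated conventions: to close the argument you must either replace ``contains $Vx$'' by ``contains $-Vx$'' or replace $\type_L(x)$ by $-\type_L(x)$ in the statement being proved (equivalently, generate the cones by $u(\tau)$ rather than $-u(\tau)$). With that one correction, the rest of your outline goes through.
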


\indent
The signs of the facets of $Z(V)$ correspond to circuits of $\mathcal{O}_L$ \cite[Corollary 7.17]{ziegler2000}.
This implies that the full dimensional cones of the partition correspond to circuits.
Hence $\type_L(x)$ is generically a circuit of $\mathcal{O}_L$.

\section{Applications to Phylogenetics}
\label{sec:phylogenetics}

In this section, we will consider how the results above can 
be applied to phylogenetic reconstruction using the $l^\infty$-metric. 
We will address Problem \ref{pr:phylogenetics}, concerning the structure of the set of $l^\infty$-closest points to 
the set of equidistant tree metrics.
In particular, we show that there can be many (equidistant) tree metrics that are equally close to a given dissimilarity map,
and they can represent many different tree topologies.
We also decompose the space of dissimilarity maps on 3
elements and on 4 elements according to the tree topologies represented in the 
set of $l^\infty$-closest equidistant tree metrics.
Finally, we investigate optimizing to the set of tree metrics
and show how many of the results for 
equidistant tree metrics carry over.

\subsection{Rooted trees and ultrametrics}

Let $RP(n)$ be the set of all $n$-leaf rooted trees with leaves labeled by $[n] := \{1,\ldots,n\}$. 
Following the convention of 
 \cite[Section 2.2]{Semple2003}, we call the elements of $RP(n)$
\emph{rooted phylogenetic $[n]$-trees}. We will also consider
the set of \emph{rooted binary phylogenetic $[n]$-trees}
which we will denote $RB(n)$.
A \emph{polytomy} of a non-binary tree is a vertex
with degree greater than three - that is, a witness to the property of being non-binary.
To represent the topology of $\mathcal{T} \in RP(n)$ we use 
the notation $(S_1(S_2))$ to indicate that the leaves labeled by
the set $S_1$ and $S_2$ are on opposite sides of the root in $\mathcal{T}$. 
We apply this notation recursively to give the topology of the rooted subtree in $\mathcal{T}$ 
induced by the labels in $S_1$ and $S_2$.
Thus, for example, we can express the topology of the
the rooted tree in Figure \ref{fig:ultrametrictrees} by $(D(C(AB))).$

Let  $\mathcal{T} \in RP(n)$ and assign a positive weighting to the edges of $\mathcal{T}$. This naturally induces a metric $\delta$ on the leaves of $\mathcal{T}$ where $\delta(i ,j)$ is the
sum of the edge weights on the unique path between $i$ and $j$. 
If we further assume that the distance from each leaf vertex to the root is the same then $\delta$ is an \emph{ultrametric}.

\begin{defn}\cite[Definition 7.2.1]{Semple2003} 
A dissimilarity map $\delta : X \times X \rightarrow \mathbb{R}$ is called an \emph{ultrametric on $X$} if for every three distinct elements $i,j,k \in X,$
$$\delta(i,j) \leq \max\{\delta(i,k), \delta(j,k) \}.$$
\end{defn}

An equidistant edge weighting of a rooted tree is a weighting of the edges where the distance from each leaf to the root is the same and where the weight of every internal edge is positive. 
Note that this allows the possibility of nonpositive weights on leaf edges. Given any ultrametric $u$ on $[n]$, there exists a unique 
$\mathcal{T} \in RP(n)$ 
and an equidistant weighting $w$ such that the ultrametric induced by $(\mathcal{T}:w)$ is equal to $u$ 
\cite[Theorem 7.2.8]{Semple2003}. 
We call $(\mathcal{T}:w)$ an 
\emph{equidistant representation of $u$} and say that
$\mathcal{T}(u) := \mathcal{T}$ is the
\emph{topology of $u$}. 

We can also convert an equidistant edge
weighting of a tree into a \emph{vertex weighting} 
of that same tree \cite[Theorem 7.2.8]{Semple2003}. 
Given any internal vertex $v$ in an equidistant representation of an ultrametric $u$, $u(i,j)$ is constant over all pairs of leaves $i,j$ having $v$ as their most recent common ancestor.
We obtain a vertex weighting from an edge weighting by labeling each internal vertex by this constant value.
For what follows, we will represent dissimilarity maps on 
$n$ elements as points in $\mathbb{R}^{n \choose 2}$ by 
letting $\delta_{ij} = \delta(i,j)$ and use 
$U_n  \subseteq \mathbb{R}^{n \choose 2}$
 to denote the set of all ultrametrics on $n$ elements. 
Many of our examples will involve dissimilarity maps on $4$ 
elements, in which case we let 
$(x_{AB},x_{AC},x_{AD},x_{BC},x_{BD},x_{CD})$
be the coordinates of an arbitrary point
in $\mathbb{R}^{4 \choose 2}$. We will also use
the notation $e_{ij}$ to denote the standard basis
vector with $x_{ij} = 1$ and all other entries equal to
zero.
\\
\begin{ex} 
Consider the ultrametric 
$u = (5,7,9,7,9,9) \in \mathbb{R}^6$.
Figure \ref{fig:ultrametrictrees} shows two equivalent ways 
of representing $u$: with a vertex weighting 
on the left and an equidistant edge weighting on the right.

\begin{figure}[h]
\includegraphics[width=14cm]{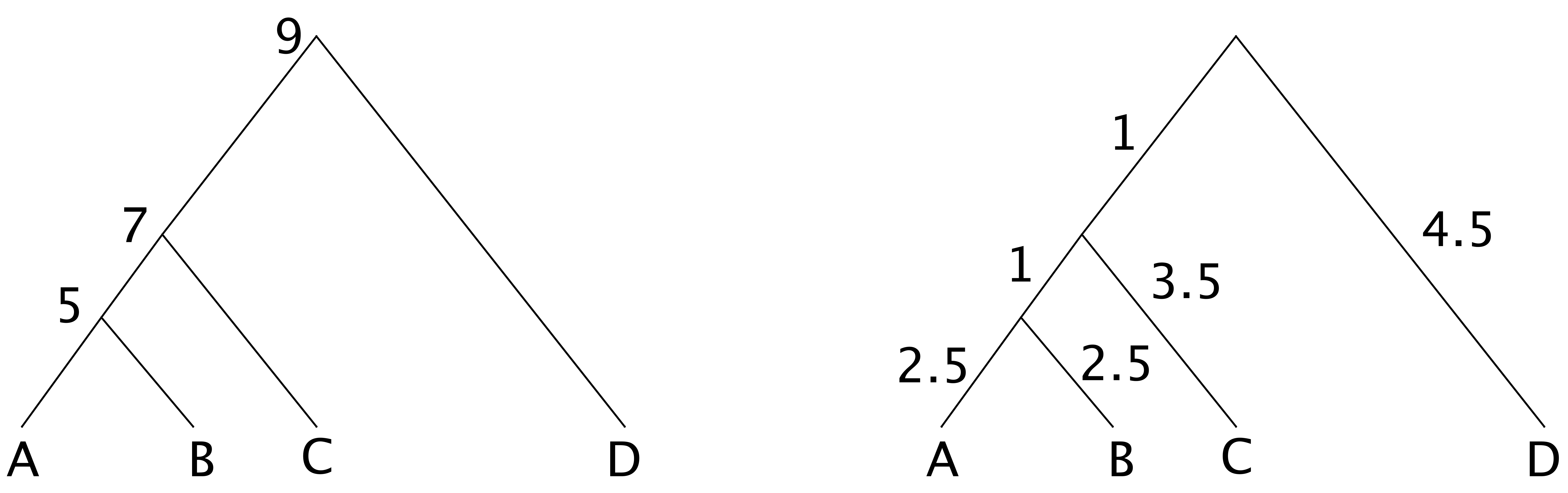}
\caption{Two different representations of $u = (5,7,9,7,9,9)$.}
\label{fig:ultrametrictrees}
\end{figure}

\end{ex}



\subsection{$l^\infty$-optimization to the set of Ultrametrics}

Given a dissimilarity map $\delta$, we let $\delta_U$ be the unique coordinate-wise maximum ultrametric 
which is coordinate-wise less than $\delta$. 
This is called the 
\emph{subdominant ultrametric of $\delta$}. 
The existence and uniqueness of the subdominant
ultrametric are proven and a polynomial time algorithm 
for computing it is given in \cite[Chapter 7]{Semple2003}.

Our interest in the subdominant ultrametric is that it gives us a 
way to determine an $l^\infty$-closest ultrametric
to a dissimilarity map $\delta$ \cite{chepoi}.
We first compute the subdominant ultrametric 
$\delta_U$ and then use that 
$$d(\delta, U_n) = \frac{1}{2}d(\delta, \delta _U) .$$
We define $\delta_c$, the 
\emph{canonical closest ultrametric to $\delta$}, by
$\delta _c(i,j) = \delta_U(i,j)  + \frac{1}{2}d(\delta, \delta _U)$.

As noted in the introduction, the set of $l^\infty$-closest ultrametrics is in general not a single point.
Moreover, in many instances, the set of $l^\infty$-closest ultrametrics to $\delta$, 
$C(\delta , U_n)$, will contain ultrametrics representing different topologies.
Thus, there may be several different trees that explain the data equally well from the perspective of the $l^\infty$-metric.

\begin{ex}
\label{ex:closestultrametrics}
	Figure \ref{fig:closestultrametrics} depicts three ultrametrics in 	
	the set of closest ultrametrics to 
	$\delta = (2,4,6,8,10,12)$.
        The subdominant ultrametric is
        $\delta_U = (2,4,6,4,6,6)$, $d(\delta,U_4) = 3$, and the canonical closest 	
        ultrametric
        (pictured far left) is $\delta_c = (5,7,9,7,9,9)$.

	\begin{figure}[h]
		\includegraphics[width=13cm]{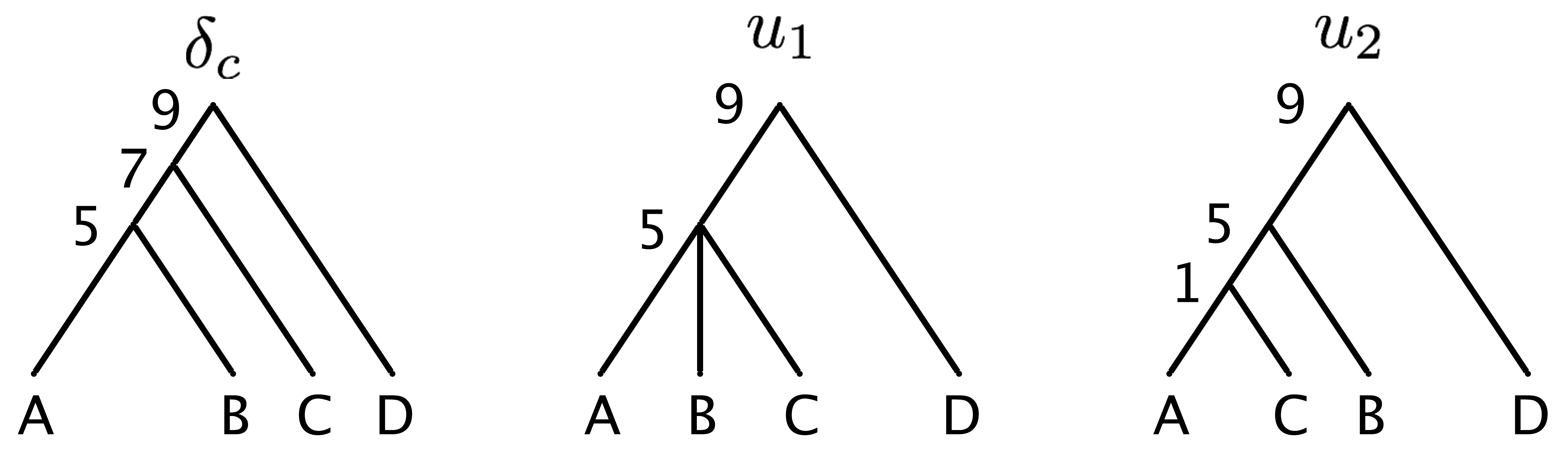}
		\caption{Three ultrametrics in $C(\delta, U_4)$ for 
		$\delta = (2,4,6,8,10,12)$.}
		\label{fig:closestultrametrics}
	\end{figure}
	
	One can easily verify that the canonical closest ultrametric 
	inherits dominance from the subdominant ultrametric. That     is, for any $\delta \in \mathbb{R}^{n \choose 2}$, every ultrametric in $C(\delta, U_n)$ is coordinate-wise less than $\delta_c$. Thus, we can construct closest ultrametrics by ``sliding down" vertices of $\delta_c$ so long as
	the $l^\infty$-distance between $\delta$ and the new 	
	ultrametric does not exceed $d(\delta,\delta_c)$.
	In Figure \ref{fig:closestultrametrics},
	we obtain $u_1$ from $\delta_c$ by sliding the middle 	
	internal vertex until it reaches the lowest one.
	We obtain $u_2$ by continuing to slide this vertex until we 
	can do so no more. Observe that in this case, the root vertex 	must remain fixed.
	
\end{ex}

Example \ref{ex:closestultrametrics} shows that it is possible for the 
set of $l^\infty$-closest ultrametrics to a point to contain different topologies. 
Below, we consider what sets of topologies are represented
in $C(\delta,U_n)$ for an arbitrary point
 $\delta \in  \mathbb{R}^{n \choose 2}$.
The idea behind most of these proofs is to find a linear space that contains the ultrametrics for many different tree
topologies and apply the constructions for linear spaces
developed in Section \ref{sec:linearspaces}.

\begin{defn} 
Let $\delta \in \mathbb{R}^{n \choose 2}$ and $u \in U_n$.
Define
$$Top(\delta) := \{ \mathcal{T}(u) : \  u \in C(\delta ,U_n)  \}.$$ 
\end{defn}

In \cite{davidson-sullivant2014}, the authors study the geometry of the set of dissimilarity maps around a polytomy with respect to the Euclidean norm. They showed that locally this space could be partitioned according to the closest tree topology. 
Proposition \ref{prop:polytomy} shows the contrast between that 
situation and using the $l^\infty$-metric. 

\begin{prop} 
\label{prop:polytomy}
Let $\mathcal{T}$ be a rooted phylogenetic $[n]$-tree with a polytomy.
Assume that $\calt$ is not the star tree.
Then there exists $\delta \in \mathbb{R}^{n \choose 2}$ such
that $Top(\delta)$ contains $\mathcal{T}$ and all of its resolutions.
\end{prop}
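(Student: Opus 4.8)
The plan is to build the dissimilarity map $\delta$ explicitly from a carefully chosen ultrametric $u$ having topology $\mathcal{T}$, so that the whole set of resolutions of the polytomy in $\mathcal{T}$ appears in $C(\delta, U_n)$. The key observation, foreshadowed in Example \ref{ex:closestultrametrics}, is that one can pass between ultrametrics representing $\mathcal{T}$ and its resolutions by \emph{sliding a vertex}: starting from an ultrametric in which the polytomy vertex $v$ sits at height $h$, lowering the subtree(s) hanging below $v$ slightly and inserting a new internal vertex resolves the polytomy, and the amount of change in any coordinate $\delta_{ij}$ is controlled by how far we slide. So the strategy is to choose $u$ and a ``radius'' $r$ so that \emph{every} resolution obtained by sliding within distance $r$ is still at $l^\infty$-distance exactly $d(\delta, U_n)$ from $\delta$.

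Concretely, first I would fix an equidistant representation $(\mathcal{T}:w)$ of an ultrametric $u$ in which the polytomy vertex $v$ has children $c_1, \dots, c_k$ with $k \ge 3$, arranged so that there is ``room'' both above and below $v$ (here we use that $\mathcal{T}$ is not the star tree, so $v$ has a parent edge of positive length, and $\mathcal{T}$ has a polytomy, so $k \ge 3$; I would also arrange the $c_i$ to be far enough below $v$). Then I would define $\delta$ by perturbing $u$ in the coordinates $\delta_{ij}$ where $i$ and $j$ have most recent common ancestor $v$ — pushing these coordinates \emph{up} by a uniform amount $2r$ — and leaving all other coordinates equal to $u$. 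The point is that for a coordinate pair $(i,j)$ with $\mathrm{mrca}(i,j) = v$, resolving the polytomy by grouping some of the $c_i$'s together lets $\delta_{ij}$ either stay at height $h$ or drop to a strictly smaller height; one wants the ``up'' perturbation large enough that $\delta_c$ (the canonical closest ultrametric) keeps all these coordinates at a common height $h + r$, and then any resolution that slides the relevant subtrees down by at most $2r$ stays within the cube $C_r(\delta)$ in those coordinates, while the untouched coordinates are unaffected. This requires checking (i) that $d(\delta, U_n) = r$ — using the subdominant-ultrametric characterization $d(\delta, U_n) = \tfrac12 d(\delta, \delta_U)$ from \cite{chepoi} and \cite[Ch. 7]{Semple2003} — and (ii) that each resolution, realized by an explicit vertex weighting, lies in $C_r(\delta) \cap U_n$, i.e. agrees with $\delta$ to within $r$ in every coordinate.

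Alternatively, and perhaps more cleanly, I would invoke the linear-space machinery of Section \ref{sec:linearspaces}: the ultrametrics of topology $\mathcal{T}$ and of each of its resolutions all lie in a common linear space $L$ (the linear hull of the cone of ultrametrics of the least-resolved topology $\mathcal{T}$, which contains the cones of all refinements, since a resolution is obtained by adding internal edges with branch-length parameters allowed to be any real number in the linear hull). Then $C(\delta, L)$ is a relatively open polyhedral set by Proposition \ref{type space}, and the goal becomes to choose $\delta$ of a type $\sigma$ with respect to $L$ so that $C(\delta, L)$ is positive-dimensional — this is exactly where Theorem \ref{dimoftype} and the non-uniformity of the relevant matroid enter — and so that $C(\delta, L)$ meets the relative interior of the resolution cone for every resolution of $\mathcal{T}$, as well as the cone for $\mathcal{T}$ itself. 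Finally one checks $C(\delta, L) \cap U_n = C(\delta, U_n)$, i.e. that optimizing to the linear space already lands inside the set of ultrametrics; this holds as long as $\delta$ is chosen close enough to the relevant part of $U_n$.

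The main obstacle I anticipate is the bookkeeping in the second half of either approach: verifying that the constructed $\delta$ genuinely has $d(\delta, U_n)$ equal to the intended radius (no \emph{other} ultrametric, of some unrelated topology, is accidentally closer), and that the explicit family of resolutions really does fill out a face of the optimal cube in \emph{every} coordinate simultaneously — the coordinates not involving the polytomy must be pinned exactly, while those involving it must have just enough slack. Getting a single $\delta$ that works for all $2^{k-1}-$ish resolutions at once, rather than one $\delta$ per resolution, is the crux; I expect this to come down to choosing the perturbation uniform over all ``polytomy coordinates'' and large relative to the branch lengths below $v$, so that the cube $C_r(\delta)$ in those coordinates contains the entire lowered range, and then citing the subdominant-ultrametric formula to nail down $d(\delta, U_n)$ exactly.
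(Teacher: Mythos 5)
Your first construction fails at the very first step: the perturbation you propose does not leave the set of ultrametrics. If you add the \emph{same} amount $2r$ to every coordinate $\delta_{ij}$ with $\mathrm{mrca}(i,j)=v$ and leave all other coordinates alone, the result is again an ultrametric --- it is exactly $u$ with the polytomy vertex $v$ raised from height $h$ to height $h+2r$ (for $r$ small enough that $v$ stays strictly below its parent, the three-point condition is preserved, since among any three leaves the two largest entries remain equal). Hence $d(\delta,U_n)=0$, not $r$, and $Top(\delta)=\{\mathcal{T}\}$; no resolution appears. To get anywhere you must perturb \emph{non-uniformly}, so as to break an equality that ultrametricity forces. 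This is what the paper does: it picks leaves with $u_{ij}<u_{ik}=u_{jk}$ (possible since $\mathcal{T}$ is not the star tree) and sets $\delta=u+\varepsilon(e_{ik}-e_{jk})$ with $0<\varepsilon<u_{ik}-u_{ij}$. The three-point inequality then forces every closest ultrametric to keep $v_{ik}=v_{jk}=u_{ik}$, which pins $d(\delta,U_n)$ at $\varepsilon$, while every \emph{other} coordinate has slack $\varepsilon$; that slack is enough to realize $u$ and every refinement of $\mathcal{T}$ (new internal vertices inserted at heights within $\varepsilon$ of the old ones) inside $C(\delta,U_n)$.

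Your second route has the containment backwards. The linear hull of the cone of ultrametrics of a non-binary topology $\mathcal{T}$ does \emph{not} contain the cones of its resolutions; it is a proper subspace of each resolution's linear hull, since resolving a polytomy adds a free height parameter and hence a dimension. For example, for $((AB)CD)$ the hull is $\{x_{AC}=x_{AD}=x_{BC}=x_{BD}=x_{CD}\}$, while the resolution $(((AB)C)D)$ needs $x_{AC}=x_{BC}<x_{AD}$. Moreover, no single resolution's linear hull contains the cones of the other resolutions, so there is no linear space $L$ with $C(\delta,L)=C(\delta,U_n)$ near $u$ that also meets every resolution cone; near a non-binary $u$ the set $U_n$ is a union of cones spanning different subspaces. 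This is precisely why a direct argument is needed here, and why the linear-space machinery of Section~\ref{sec:linearspaces} is instead deployed in Proposition~\ref{prop:dim}, where all closest ultrametrics share a single binary topology.
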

\begin{proof} 
Let $u$ be an ultrametric with $\mathcal{T}(u) = \mathcal{T}$.
Since $\calt$ is not the star tree, there exist three leaves $\{i,j,k\}$ such that $u_{ij} < u_{ik} = u_{jk}$.
Define $\delta := u + \varepsilon(e_{ik} - e_{jk})$ for some $0 < \varepsilon < u_{ik}-u_{ij}$. 
Then $u$ is in $C(\delta ,U_n)$ and so are all possible resolutions of the polytomy. 
\end{proof}

\begin{ex} Let $(x_{AB},x_{AC},x_{AD},x_{BC},x_{BD},x_{CD})$ be the coordinates of a point in $\mathbb{R}^{4 \choose 2}$ and consider $u = (5,5,10,5,10,10) \in \mathbb{R}^{4 \choose 2}$. The topology of the ultrametric $u$ is the rooted tree $(D(ABC))$ with an unresolved tritomy.  

Note that $u_{BC} <  u_{BD} = u_{CD}$. Choose
$\varepsilon = 1$ and let 
$$\delta = u + \varepsilon(e_{CD} - e_{BD}) =
(5,5,10,5,9,11).$$ 
The canonical closest ultrametric $\delta_c = (6,6,10,6,10,10)$ also has an unresolved tritomy and $C(\delta ,U_4)$ contains ultrametrics corresponding to each different resolution.
For example, $(4,6,10,6,10,10), 
(6,4,10,6,10,10),$ and
$(6,6,10,4,10,10)$ are all elements of $C(\delta ,U_4)$. 
\end{ex}

We obtain the following corollary by choosing a tree with a single resolved triple in the proof of Proposition \ref{prop:polytomy}.

\begin{cor} 
\label{cor:tritomy}
There exist points in $\mathbb{R}^{n \choose 2}$ for which $Top(\delta) \cap RB(n)$ contains $(2n - 3)!!/3$ different tree topologies.
\end{cor}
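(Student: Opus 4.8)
The plan is to exhibit a single $\delta \in \mathbb{R}^{n \choose 2}$ whose set of closest ultrametrics realizes precisely the rooted binary phylogenetic $[n]$-trees displaying one fixed rooted triple, say $12|3$, and then to count these. For the count: there are $(2n-3)!!$ rooted binary phylogenetic $[n]$-trees, and the map sending such a tree to the resolution of $\{1,2,3\}$ it displays is equivariant for the transitive action of $S_3$ permuting the labels $1,2,3$; hence each of the three resolutions is displayed by exactly $(2n-3)!!/3$ of them. The case $n=3$ is immediate, since $(2\cdot 3-3)!!/3 = 1$ and any $\delta$ in the interior of a top-dimensional cone of $U_3$ has $Top(\delta)$ equal to a single binary topology, so I assume $n \ge 4$.

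For $\delta$ I will take the perturbation used in the proof of Proposition \ref{prop:polytomy} applied to the ultrametric $u$ whose only nontrivial cherry is $\{1,2\}$, choosing the cherry height within $2\varepsilon$ of the common value of the remaining coordinates of $u$ --- it is this proximity that will keep binary topologies whose cherries avoid $\{1,2\}$ in play. Concretely, I take $\delta_{12} = 6$, $\delta_{13} = 13$, $\delta_{23} = 7$, and $\delta_{pq} = 10$ for every other pair. The subdominant ultrametric $\delta_U$ then has cherry $\{1,2\}$ at $6$ and $(\delta_U)_{13} = (\delta_U)_{23} = \min(13, \max(6,7)) = 7$, so $d(\delta, \delta_U) = 6$ and hence $d(\delta, U_n) = 3$; thus $C(\delta, U_n) = \{v \in U_n : |v_{pq} - \delta_{pq}| \le 3 \text{ for all } p,q\}$. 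Applying the ultrametric inequality to $\{1,2,3\}$, together with $v_{12} \le 9 < 10 \le v_{13}$, forces $v_{13} = v_{23}$, and the constraints $v_{13} \in [10,16]$, $v_{23} \in [4,10]$ then pin this common value to $10$; so every $v \in C(\delta, U_n)$ has $\mathcal{T}(v)$ displaying $12|3$, giving $Top(\delta) \cap RB(n) \subseteq \{\mathcal{T}' \in RB(n) : \mathcal{T}' \text{ displays } 12|3\}$.

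The substance of the proof is the reverse inclusion. Given a binary $\mathcal{T}'$ displaying $12|3$, let $m$ be the most recent common ancestor of $\{1,2,3\}$ in $\mathcal{T}'$; then the most recent common ancestor $w$ of $\{1,2\}$ is a proper descendant of $m$. I will build an equidistant weighting of $\mathcal{T}'$ by assigning the internal vertices distinct heights band by band: the proper ancestors of $m$ into $(10,13]$ increasing toward the root, $m$ to $10$, the internal vertices strictly between $m$ and $w$ into $(8,10)$, $w$ to $8$, and all remaining internal vertices into $[7,8)$ --- respecting $\mathcal{T}'$'s ancestor order within each band, which is possible since each band is an interval of positive length and, as one checks, the banding respects the ancestor order (an ancestor is never placed in a lower band than its descendant). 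The induced ultrametric $v$ then has $\mathcal{T}(v) = \mathcal{T}'$ because the heights are distinct and increase toward the root, and every internal vertex --- hence every coordinate $v_{pq}$, which is the height of the most recent common ancestor of $p$ and $q$ --- lies in $[7,13]$. Checking the pairs $\{1,2\}$, $\{1,3\}$, $\{2,3\}$ by hand and recalling $\delta_{pq} = 10$ for all other pairs gives $|v_{pq} - \delta_{pq}| \le 3$, so $v \in C(\delta, U_n)$ and $\mathcal{T}' \in Top(\delta)$. Together with the previous inclusion and the count this proves the corollary; note that $C(\delta, U_n)$ is here strictly larger than the set of resolutions of $\mathcal{T}(u)$, which is why the conclusion of Proposition \ref{prop:polytomy} alone does not suffice.

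The reverse inclusion is the step I expect to be the main obstacle: one must fit every one of the combinatorially varied binary topologies displaying $12|3$ into the thin windows cut out by $\{|v_{pq} - \delta_{pq}| \le \varepsilon\}$. What makes this tractable is that the only coordinates whose box interacts with the tree shape are $v_{12}, v_{13}, v_{23}$, and two of these are pinned to a single value; every other coordinate only needs the corresponding internal vertex to have height in $[\delta_{23}, \delta_{13}]$, which is automatic once all internal vertices are placed in that interval. A secondary point that needs care is the choice of the cherry height of the base ultrametric: if it is taken too far below the other entries, the realizable topologies are only those with cherry $\{1,2\}$, which would give merely $(2n-5)!!$ topologies instead of $(2n-3)!!/3$.
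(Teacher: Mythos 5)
Your proof is correct, and it supplies content that the paper's own one-sentence derivation omits. The paper obtains the corollary by invoking Proposition \ref{prop:polytomy} for ``a tree with a single resolved triple,'' but the conclusion of that proposition only places $\mathcal{T}(u)$ and its refinements in $Top(\delta)$: starting from the one-cherry tree $((12)3\cdots n)$ this accounts for just $(2n-5)!!$ binary topologies, whereas $(2n-3)!!/3 = \tfrac{2n-3}{3}\,(2n-5)!!$ requires, for $n\ge 4$, binary trees that do \emph{not} refine $\mathcal{T}(u)$ --- for instance $(3(2(14)))$ when $n=4$, which displays the triple $(3(12))$ but does not have $\{1,2\}$ as a cluster. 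Your argument closes exactly this gap. The computation of $\delta_U$ and of the radius-$3$ box is right; pinning $v_{13}=v_{23}=10$ via the ultrametric inequality correctly yields $Top(\delta)\cap RB(n)\subseteq\{\mathcal{T}'\in RB(n): \mathcal{T}'\text{ displays }(3(12))\}$; the banded height assignment is a legitimate equidistant vertex weighting realizing every such $\mathcal{T}'$ inside the box (the one check that matters --- that no internal vertex off the root-to-$w$ path is an ancestor of a vertex on it --- does hold, and empty bands cause no trouble when $m$ is the root or $w$ is a child of $m$); and the $S_3$-orbit count giving $(2n-3)!!/3$ is correct. Your closing observation, that the cherry height of the base ultrametric must lie within $2\varepsilon$ of the remaining entries lest only the $(2n-5)!!$ refinements survive, is precisely the condition the paper's sketch leaves implicit, and it is the real content of the corollary; it also explains why the paper separately observes that full-dimensional cones in $\mathbb{R}^{\binom{4}{2}}$ realize five closest binary topologies, in agreement with $(2\cdot 4-3)!!/3=5$.
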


We will also see from our decomposition of
$\mathbb{R}^{4 \choose 2}$ 
that there are actually 6-dimensional polyhedral cones 
in which every point in the interior has 
five $l^\infty$-closest binary tree topologies.
\\
\indent
Even when all $l^\infty$-closest ultrametrics to some given $\delta \in \rr^{\binom{n}{2}}$ have the same topology,
the dimension of the set of $l^\infty$-closest ultrametrics can be high.
The affine hull of each maximal cone of $U_n$ is a linear space defined by relations of the form $x_{ik} - x_{jk} = 0$ where $(k(ij))$ is a triple compatible with the corresponding tree.
As before, we can find points where optimizing to $U_n$ is equivalent to optimizing to such a linear space and so our results from Section \ref{sec:linearspaces} can be applied.

\begin{prop}
\label{prop:dim}
Let $\mathcal{T} \in RB(n)$. There exists $\delta \in \mathbb{R}^{n \choose 2}$ such
that $\dim(C(\delta ,U_n)) = n - 2$ and every ultrametric in $C(\delta ,U_n)$ has topology $\mathcal{T}$.
\end{prop}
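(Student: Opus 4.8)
The plan is to realize $C(\delta, U_n)$, for a suitable $\delta$, as the set of $l^\infty$-closest points to a single linear space $L$ and then invoke Theorem~\ref{dimoftype}. Fix $\mathcal{T} \in RB(n)$ and let $L$ be the affine (in fact linear, after translating) hull of the maximal cone of $U_n$ corresponding to $\mathcal{T}$; this is the linear space cut out by the relations $x_{ik} - x_{jk} = 0$ for all triples $(k(ij))$ compatible with $\mathcal{T}$. Equivalently, $L$ is the span of the vertex-weighting vectors: each internal vertex $v$ of $\mathcal{T}$ contributes the indicator vector of the pairs $\{i,j\}$ whose most recent common ancestor is an ancestor of (or equal to) $v$, and $\mathcal{T}\in RB(n)$ has exactly $n-1$ internal vertices, so $\dim L = n-1$.

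First I would pick an ultrametric $u$ with $\mathcal{T}(u) = \mathcal{T}$ lying in the relative interior of the corresponding cone, and then perturb it to a $\delta$ whose $l^\infty$-projection onto all of $U_n$ is achieved inside this one cone (this is the ``optimizing to $U_n$ is equivalent to optimizing to $L$'' reduction the paper repeatedly uses; concretely, choose the perturbation small enough, in the style of Proposition~\ref{prop:polytomy}, that the subdominant-ultrametric computation never leaves the open cone). I then need to choose the perturbation direction so that $\type_L(\delta)$ is a sign vector $\sigma \in \mathcal{O}_L$ with $\rank(\sigma) = 1$: a single circuit of size $2$, i.e.\ a relation $x_{ik} - x_{jk}$ supported on two coordinates. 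Such a circuit exists precisely because $\mathcal{M}_L$ is not uniform — the defining relations $x_{ik} - x_{jk} = 0$ are $2$-sparse — so setting $\delta = u + \varepsilon(e_{ik} - e_{jk})$ for an appropriate compatible triple $(k(ij))$ and small $\varepsilon > 0$ gives, by Proposition~\ref{type space} and the computation of $u(\cdot)$, exactly $\type_L(\delta)_{ik} = -$, $\type_L(\delta)_{jk} = +$, and $0$ elsewhere. Then Theorem~\ref{dimoftype} yields $\dim C(\delta, L) = (n-1) - 1 = n-2$.

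It remains to transfer this back to $U_n$ and to check the topology claim. Since the perturbation keeps us inside the open cone, $C(\delta, U_n) = C(\delta, L) \cap \overline{\text{cone}_{\mathcal{T}}}$, and I would argue that (shrinking $\varepsilon$ if necessary) the whole $(n-2)$-dimensional face $C(\delta, L)$ stays in the closed cone of $\mathcal{T}$, so $C(\delta, U_n) = C(\delta, L)$ has dimension $n-2$; every point of it is an ultrametric whose topology is a (possibly degenerate) coarsening of $\mathcal{T}$, but since $C(\delta,L)$ meets the relative interior of the cone, a generic — hence all — of its points have topology exactly $\mathcal{T}$. Actually, to get \emph{every} ultrametric in $C(\delta,U_n)$ to have topology $\mathcal{T}$, I would instead pick $u$ deep in the interior and $\varepsilon$ small relative to all the gaps $u_{ab} - u_{cd}$ between distinct vertex heights, so that no coordinate of any point of $C(\delta, L)$ can collide and force a degeneration; this is the one place requiring a genuine (though routine) estimate.

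The main obstacle is the last paragraph: cleanly justifying that the reduction ``$C(\delta, U_n) = C(\delta, L)$'' holds and that \emph{no} point of the $(n-2)$-dimensional solution face escapes the open cone of $\mathcal{T}$. The dimension count itself is immediate from Theorem~\ref{dimoftype} once the type is identified as a $2$-element circuit, and identifying that type is immediate from Proposition~\ref{type space}; the real work is the containment/locality argument, which I expect to handle by an explicit $\varepsilon$-bound in terms of the minimal height gap of $u$, entirely parallel to the perturbation argument already used in the proof of Proposition~\ref{prop:polytomy}.
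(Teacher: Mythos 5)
Your proposal is correct and follows essentially the same route as the paper: perturb an ultrametric $u$ with topology $\mathcal{T}$ by $\varepsilon(e_{ik}-e_{jk})$ for a compatible triple $(k(ij))$, reduce to the affine hull $L$ of the corresponding maximal cone for small $\varepsilon$, identify the type as a rank-one sign vector supported on $\{ik,jk\}$, and apply Theorem~\ref{dimoftype} to get $(n-1)-1=n-2$. The paper simply asserts the locality claim ``$C(\delta,U_n)=C(\delta,L)$ for $\varepsilon$ sufficiently small'' without the explicit $\varepsilon$-bound you flag, so your extra care there is compatible with (indeed slightly more complete than) the published argument.
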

\begin{proof} Let $u$ be an ultrametric and $(k(ij))$ a triple compatible with $\mathcal{T}(u)$. 
For $\varepsilon > 0$, let $\delta = u + \varepsilon(e_{ik} - e_{jk})$. If $\varepsilon$ is sufficiently small, $C(\delta ,U_n) = C(\delta ,L)$ where $L$ is the affine hull of the maximal cone of $U_n$ containing $u$. The type of $x$ relative to $L$ is the sign vector $\sigma$ where $\sigma_{ij} = +$, $\sigma_{ik} = -$, and all other entries are zero.
The rank of $\sigma$ in $\mathcal{O}_L$ is one, and
thus by Theorem \ref{dimoftype}, $\dim(C(\delta ,U_n)) = (n-1) - 1 =  n - 2$.
\end{proof}

\begin{ex} 
\label{dim of C}
Let $(x_{AB},x_{AC},x_{AD},x_{BC},x_{BD},x_{CD})$ be the coordinates of a point in $\mathbb{R}^{4 \choose 2}$.
Choose $u = (5,7,9,7,9,9)$, the ultrametric corresponding to the tree at far left in Figure \ref{fig:ultrametrictrees}. 
We will perturb $u$ to construct a dissimilarity map 
$\delta$ where the set of $l^\infty$-closest
points to $\delta$ has dimension two.
The triple $(C(AB))$ is compatible with 
$\mathcal{T}(u)$ and so we let  
$$\delta =
(5,7,9,7,9,9) + (e_{AC} - e_{BC})= 
(5,8,9,6,9,9).$$
The subdominant ultrametric $\delta_U = (5,6,9,6,9,9)$ and the canonical ultrametric $\delta_c=(6,7,10,7,10,10)$. 
We have two degrees of freedom that come from adjusting the values of $\{(\delta_c)_{AD},(\delta_c)_{BD},(\delta_c)_{CD}\}$ (sliding down 
the root) or $\{(\delta_c)_{AB}\}$ (sliding down the most recent common ancestor of $A$ and $B$). 
\end{ex}

\subsection{The Decomposition for 3-Leaf and 4-Leaf Trees}

The following definition makes formal the idea of partitioning 
the points in $\mathbb{R}^{n \choose 2}$ according to their
sets of $l^\infty$-closest trees.

\begin{defn}
Let 
 $\{\mathcal{T}_1, \ldots, \mathcal{T}_k\}
  \subseteq 
 RP(n)$.
The \emph{district} of 
$\{\mathcal{T}_1, \ldots, \mathcal{T}_k\}$ is the set
 $$D(\{\mathcal{T}_1, \ldots, \mathcal{T}_k\}) := 
 \{\delta \in \mathbb{R}^{n \choose 2} :
 Top(\delta) 
  = \{\mathcal{T}_1, \ldots, \mathcal{T}_k\} \}.
 $$
\end{defn}

We can represent a dissimilarity map on three elements as a point $(x_{12},x_{13},x_{23}) \in \mathbb{R}^3$. 
There are three maximal cones of $U_3$ corresponding to the 
three elements of $RB(3)$. 
Modulo the common lineality space of each of these cones, 
$\text{span}\{(1,1,1)\}$,
we can fix the first coordinate at zero and represent the space
of dissimilarity maps on three elements in the plane. 

Figure \ref{fig:3leaftreespace} depicts
a polyhedral subdivision of $\mathbb{R}^3$ according to districts. There are seven cones in this subdivision. The labels
$(1(23)), (3(12))$, and $(2(13))$ label the image of the set of ultrametrics for each topology. These labels also label the areas between the dashed lines which are the 2-dimensional images of the three 3-dimensional districts
$D\{(1(23))\}$,
$D\{(2(13))\}$, and
$D\{(3(12))\}$.
The dotted lines themselves are the images of the 2-dimensional cones whose interiors form the districts
$D\{(123), (1(23)), (2(13))\}$,
$D\{(123),(1(23)), (3(12))\}$, and
$D\{(123),(2(13)), (3(12))\}$.
The origin represents the image of $\text{span}\{(1,1,1)\}$ which is the district of the 3-leaf claw tree, $D\{(123)\}$.

\begin{figure}
		\includegraphics[width=8cm]{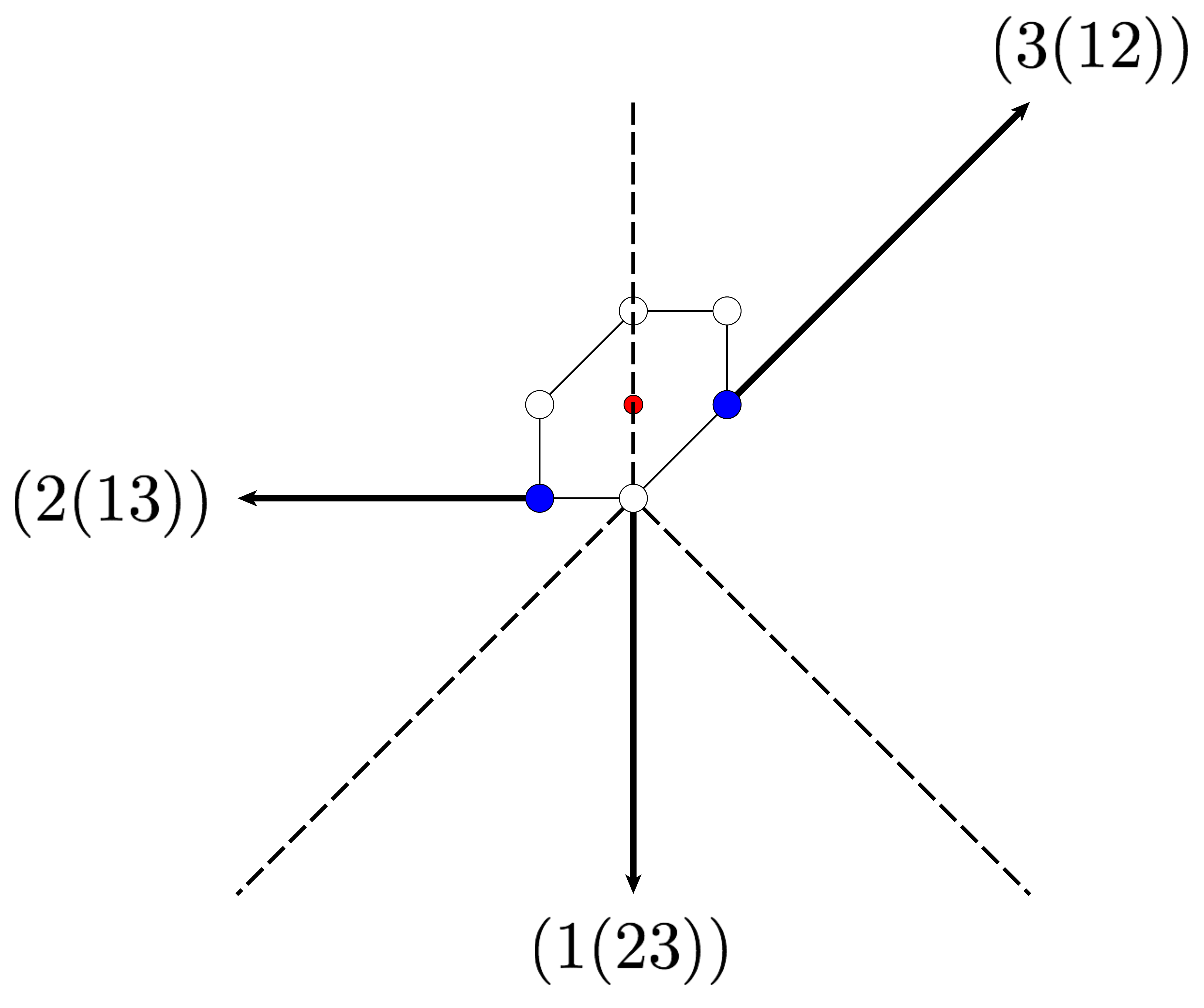}
		\caption{A 2-dimensional representation of the polyhedral subdivision of $\mathbb{R}^3$ according to district.}\label{fig:3leaftreespace}
	\end{figure}
	
\begin{ex}
\label{ex:zonotope}
	The image of the dissimilarity map $\delta = (1,1,3)$
	after modding out by $U_3$'s lineality space
	is pictured in Figure \ref{fig:3leaftreespace}.
	Note that $d(\delta,U_3) = 1$.
	The hexagon surrounding it is the zonotope that is the image of the cube $C_{1}(\delta)$.
	The filled vertices of the zonotope
	correspond to the fully resolved 
	$l^\infty$-closest ultrametrics $(2,1,2)$ and $(1,2,2)$.
	The origin corresponds to the $l^\infty$-closest ultrametric 
	 $(2,2,2)$ and 
	$\delta \in D\{(123),(2(13)), (3(12))\}$.
	
\end{ex}

The decomposition for $4$-leaf trees is much more complicated. The supplemental materials, located at 
\begin{center}
\bigskip
\url{http://www4.ncsu.edu/~dibernst/Supplementary_materials/L-infinity.html}
\bigskip
\end{center}

\noindent
contain a Maple \cite{Maple} file for computing a 
polyhedral subdivision of $\mathbb{R}^{4 \choose 2}$ into a fan consisting of 723 maximal polyhedral cones labeling 37 different districts. Each maximal cone is labeled by a set 
$\{\mathcal{T}_1, \ldots, \mathcal{T}_k\} \subseteq RP(n)$, meaning that each point in the interior of the cone is in
 $D(\{\mathcal{T}_1, \ldots, \mathcal{T}_k\} )$.

The computations rely heavily on the functionality of the package {\tt{PolyhedralSets}} (available in Maple2015 and later versions).
The fan is computed by first considering each of the fifteen different trees in $RB(4)$ individually.  
For each $\mathcal{T} \in RB(4)$, we construct a fan with support $\mathbb{R}^{4 \choose 2}$, where all of the points in the interior of each maximal cone in
 the fan satisfy either
 $\mathcal{T} \in Top(\delta)$ or 
 $\mathcal{T} \not \in Top(\delta)$. 
The resulting polyhedral subdivision is the common refinement of these fifteen fans. 
Note that there are far more than 37 districts since
our construction only labels the 6-dimensional districts.
 
%

Based on the 3-leaf case, one might hope that districts are
easily described or possess some nice properties. For example, the 3-leaf districts are all convex and tropically convex.
However, the 4-leaf case shows that many of these properties do not hold in 
general. For the rest of this section, 
we let 
$(x_{12},x_{13},x_{14},x_{23},x_{24},x_{34})$ denote an arbitrary point in
$\mathbb{R}^{4 \choose 2}.$ 
We include some results
for those familiar with tropical geometry 
and its connections to
phylogenetics without the requisite background that
would take us too far afield.

\begin{prop} Districts are not necessarily convex nor tropically convex.
\end{prop}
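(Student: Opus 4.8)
The plan is to exhibit a single explicit counterexample showing that some district in $\mathbb{R}^{\binom{4}{2}}$ fails to be convex, and to check that the same (or a similarly built) example is not tropically convex either. Since districts partition $\mathbb{R}^{\binom{4}{2}}$ by the set $Top(\delta)$ of $l^\infty$-closest ultrametric topologies, it suffices to find two dissimilarity maps $\delta^{(1)}$ and $\delta^{(2)}$ with $Top(\delta^{(1)}) = Top(\delta^{(2)}) = \{\mathcal{T}_1,\dots,\mathcal{T}_k\}$ but with some point $\delta^{(3)}$ on the segment joining them (resp.\ in their tropical span) having a strictly different set of closest topologies.

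First I would use the computational decomposition referenced in the supplementary Maple file: among the $723$ maximal cones labeling $37$ districts, one can simply search for a district whose union of maximal cones is disconnected or non-convex, and read off representative interior points $\delta^{(1)}, \delta^{(2)}$ from two of its cones. Concretely I expect to pick a district such as one of the form $D(\{\mathcal{T}\})$ for a fixed binary tree $\mathcal{T} \in RB(4)$, or a district of the form $D(\{\mathcal{T}, \mathcal{T}'\})$ for two binary trees sharing a common resolution pattern; these are the natural candidates because the boundary walls between districts are governed by the subdominant ultrametric construction and the zonotopal face fan of Section \ref{sec:linearspaces}, and there is no reason for the resulting cells to be convex once more than one topology is involved. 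For each chosen point I would verify membership by computing the subdominant ultrametric $\delta_U$, then $d(\delta,U_4) = \tfrac12 d(\delta,\delta_U)$, then the canonical closest ultrametric $\delta_c$, and finally enumerate all ultrametrics in $C(\delta,U_4)$ by the ``sliding down vertices of $\delta_c$'' procedure described after Example \ref{ex:closestultrametrics}; the topologies of those ultrametrics give $Top(\delta)$.

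Next I would take the midpoint $\delta^{(3)} = \tfrac12(\delta^{(1)} + \delta^{(2)})$ and run the same computation to exhibit $Top(\delta^{(3)}) \neq Top(\delta^{(1)})$, establishing non-convexity. For the tropical statement I would either reuse the same three points (checking that $\delta^{(3)}$, or some tropical combination $\lambda \odot \delta^{(1)} \oplus \mu \odot \delta^{(2)} = \max(\lambda + \delta^{(1)}, \mu + \delta^{(2)})$, lands in a different district) or produce a second small example tailored to the tropical operations; tropical convexity is a natural thing to test here because $U_n$ itself is tropically convex, so one might naively hope districts inherit this, and a counterexample is the cleanest refutation. I would present both $\delta^{(1)}, \delta^{(2)}, \delta^{(3)}$ with their closest-ultrametric data displayed, so a reader can verify the claim by hand or by the supplementary code.

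The main obstacle is not conceptual but bookkeeping: correctly and legibly enumerating $C(\delta, U_4)$ for each test point, since this requires tracking which internal vertices of $\delta_c$ can be lowered and by how much before the $l^\infty$-distance to $\delta$ increases, and the answer is sensitive to ties among coordinates. I would mitigate this by choosing $\delta^{(1)}, \delta^{(2)}$ with small integer or half-integer entries so that $\delta_U$, $d(\delta, U_4)$, and $\delta_c$ are all transparent, and by cross-checking each claimed membership against the Maple computation of the fan. A secondary subtlety is making sure the chosen district genuinely has $Top$ equal to the full labeled set (not a proper subset) at the representative points, which again is handled by the explicit enumeration; once the three points and their closest-ultrametric sets are in hand, the failure of convexity and of tropical convexity is immediate.
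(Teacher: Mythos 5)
Your approach is the same as the paper's: the paper proves this by exhibiting two explicit points $\delta^1=(10,20,21,23,25,27)$ and $\delta^2=(10,23,21,20,25,27)$ in the district $D(\{(4(3(12)))\})$ whose ordinary midpoint and whose tropical combination $(0\odot\delta^1)\oplus(-\tfrac32\odot\delta^2)$ both land in the different district $D(\{(3(4(12)))\})$, exactly the single-binary-tree district you flag as the natural candidate. The only thing missing from your write-up is the actual witness points and their verified $Top$ sets, which for a counterexample statement is the entire content of the proof, so you would still need to carry out the computation you describe before the argument is complete.
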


\begin{proof}
\label{not convex}
We offer the following counterexample in $\mathbb{R}^{4 \choose 2}.$
Let $\delta^1 = (10,20,21,23,25,27)$ and 
$\delta^2 = (10,23,21,20,25,27)$. Not only is 
$Top(\delta^1) = Top(\delta^2) =  \{ (4(3(12))) \}$, but
in fact  
$\delta^1_U = \delta^2_U = (10,20,21,20,21,21)$.
The point
$\delta^3 = \frac{1}{2}\delta^1 + \frac{1}{2}\delta^2 $ lies on 
the line between these two points 
but $Top(\delta^3) =  \{ (3(4(12)))\}$.

 Similarly, using the operations of the max-plus algebra, the point
$\delta^4 =  (0\odot \delta^1) \oplus (- \frac{3}{2}\odot \delta^2 )$ 
lies on the tropical line between these two points but 
$Top(\delta^4) =  \{ (3(4(12)))\}$.
\end{proof}

Let $\delta$ and $\delta'$ be dissimilarity maps in $\mathbb{R}^{n \choose 2}$. From the algorithm for computing the subdominant ultrametric, it is clear that $\delta$ and $\delta'$ will have the same subdominant ultrametric topology if they have the same relative ordering of coordinates - that is, $\delta_{ij} \leq \delta_{kl}$
if and only if $\delta'_{ij} \leq \delta'_{kl}$ and
$\delta_{ij} < \delta_{kl}$
if and only if $\delta'_{ij} < \delta'_{kl}$.
For 3-leaf trees, relative ordering also completely
determines district. The example below 
demonstrates that for trees with more
 than three leaves this is not the case.



\begin{ex}
Let $\delta^1 = (4,8,12,9,21,22)$ and $\delta^2 = (4,8,12,9,13,14)$.
Both dissimilarity maps satisfy 
$\delta_{12}^i < \delta_{13}^i < \delta_{23}^i < \delta_{14}^i < \delta_{24}^i < \delta_{34}^i$.
However, $Top(\delta^1) = \{ (4(3(12))) \}$ and
$Top(\delta^2) =  \{ (4(3(12))), (4(2(13))), (4(1(23))) \}$.
\end{ex}

It does not appear possible to simplify the given subdivision 
of $\mathbb{R}^{4 \choose 2}$ much further by combining cones. Consider for example the forty maximal cones that constitute the district $D(\{(4(3(12)))\})$. 
Any five element subset of these cones contains a pair whose 
convex hull has full dimensional intersection with the interior of a
 maximal cone from another district.
Therefore, by combining these cones the best we could hope
for is to represent this district as the union of ten maximal convex cones. While a few can be patched together the final description does not appear any simpler. 

This polyhedral subdivision was constructed by 
examining each possible 4-leaf subdominant ultrametric topology and writing out inequalities to determine when we could  obtain a new topology. 
It is certainly possible, though likely much more
difficult, to do the same thing for trees with any fixed number of leaves. It is unclear how to generalize our approach to an arbitrary number of leaves and so the following problem remains open.

\begin{pr}
	Give a polyhedral decomposition of $\rr^{\binom{n}{2}}$ according to districts.
\end{pr}

\subsection{Tree Metrics} 

We end with a note about $l^\infty$-optimization to the set of tree metrics. A tree metric $\delta$ on $[n]$ is a metric induced by a positive edge weighting of an $n$-leaf tree (no longer rooted nor equidistant). The pair $(\mathcal{T}:w)$ that realizes this metric is called a \emph{tree metric representation of $\delta$}. A metric $\delta$ is a tree metric if and only if it satisfies the four-point condition \cite[Theorem 7.2.6]{Semple2003} . 

\begin{defn}
\label{4 point}
\cite[Definition 7.2.1]{Semple2003} 
A dissimilarity map $\delta : X \times X \rightarrow \mathbb{R}$ satisfies the four-point condition if for every four (not necessarily distinct) elements $w,x,y,z \in X$, 
$$\delta(w,x) + \delta(y,z) \leq
 \max \{ \delta(w,y) + \delta(x,z), \delta(w,z) + \delta(x,y)\}.$$
\end{defn}

We use the notation $\mathcal{T}_n \subset \mathbb{R}^{n \choose 2}$ to denote the set of all 
tree metrics on $[n]$. If we insist that the points in Definition \ref{4 point} are distinct, then the set of metrics satisfying the \emph{distinct} 4-point condition is the tropical Grassmannian \cite{Speyer}.
 Thus, the problem
of finding the closest tree metric is closely related to the problem of $l^\infty$-optimization to this tropical variety. 

Although there is no subdominant tree metric, we can still compute the $l^\infty$-distance from an arbitrary point to the set of tree metrics. The set of binary phylogenetic trees with label set $[n]$ is $B(n)$. For each $\mathcal{T} \in B(n)$, the distance to the set of tree metrics with topology $\mathcal{T}$ can be found by solving a linear program. 
Taking the minimum of these $(2n-5)!!$ individually 
computed distances
gives us the distance to the set of tree metrics.
Results in a forthcoming paper show that $C(\delta,U_n)$ is always a tropical polytope and thus connected. 
This is a nice property from the perspective of phylogenetic
 reconstruction, as it means we have a set of 
closest ultrametrics any of which can be obtained from
another by shrinking and growing branch lengths without ever leaving the set $C(\delta,U_n)$. The same does not hold for tree metrics.

\begin{prop} 
\label{prop: not connected}
There exists 
$\delta \in \mathbb{R}^{6 \choose 2}$ 
such that $C(\delta,{\mathcal{T}_6})$ and 
$C(\delta, \mathcal{G}_{2,6})$ are not connected.
\end{prop}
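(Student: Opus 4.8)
The plan is to exhibit an explicit dissimilarity map $\delta \in \mathbb{R}^{\binom{6}{2}}$ and to verify directly that both $C(\delta,\mathcal{T}_6)$ and $C(\delta,\mathcal{G}_{2,6})$ break up into at least two pieces. The conceptual point is that, in contrast with the ultrametric case — where the subdominant ultrametric and the dominance inherited by $\delta_c$ force $C(\delta,U_n)$ to be (tropically) convex and hence connected — tree metric space has no dominating object, so a dissimilarity map placed symmetrically between two well-separated binary topologies will have its set of closest points concentrated near those two topologies with nothing connecting them.

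First I would choose two topologies $\mathcal{T}_1,\mathcal{T}_2 \in B(6)$ sharing as few splits as possible, so that the maximal cones $\sigma_1,\sigma_2 \subseteq \mathcal{T}_6$ of tree metrics realizing them meet only along a low-dimensional face consisting of very unresolved tree metrics. Then I would take a tree metric in the relative interior of $\sigma_1 \cap \sigma_2$ and perturb it by a small vector of the type used throughout the paper — a combination of terms $\varepsilon(e_{ij}-e_{kl})$ — chosen so that $\delta$ is symmetric under an isometry of $\mathbb{R}^{\binom{6}{2}}$ exchanging $\sigma_1$ and $\sigma_2$. Optimizing the $l^\infty$-distance to a single topology is the linear program described after Definition \ref{4 point}, so each $d(\delta,\sigma)$ and the associated optimal face are computable; the symmetry makes $d(\delta,\sigma_1)=d(\delta,\sigma_2)$ automatic and cuts down the number of remaining topologies to examine.

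To conclude disconnectedness I would argue by separating cones. One shows that every point of $C(\delta,\mathcal{T}_6)$ lies in a maximal cone belonging to one of two classes $\mathcal{A}_1 \ni \sigma_1$ and $\mathcal{A}_2 \ni \sigma_2$, and that any $\tau_1 \in \mathcal{A}_1$ and $\tau_2 \in \mathcal{A}_2$ meet only along a face disjoint from $C(\delta,\mathcal{T}_6)$. Then $C(\delta,\mathcal{T}_6)$ is the disjoint union of the two nonempty closed sets $C(\delta,\mathcal{T}_6)\cap\bigcup\mathcal{A}_1$ and $C(\delta,\mathcal{T}_6)\cap\bigcup\mathcal{A}_2$, hence is not connected. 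For $\mathcal{G}_{2,6}$, since $\mathcal{T}_6 \subseteq \mathcal{G}_{2,6}$ we have $d(\delta,\mathcal{G}_{2,6}) \le d(\delta,\mathcal{T}_6)$; I would arrange the example so that the LP-optimal tree metrics already use positive internal edge weights, forcing equality, and then run the same cone-separation argument over the fan of $\mathcal{G}_{2,6}$, additionally checking that the cells of $\mathcal{G}_{2,6}$ not present in $\mathcal{T}_6$ do not reconnect the two components.

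The main obstacle is precisely this last global check: ruling out a connecting path means certifying, for every binary topology not in $\mathcal{A}_1 \cup \mathcal{A}_2$ (there are $(2\cdot 6 - 5)!! = 105$ of them in all) as well as for the relevant coarser cells, that its $l^\infty$-closest point to $\delta$ is strictly farther than $d(\delta,\mathcal{T}_6)$, and the constants in the perturbation must be tuned so that $\sigma_1$ and $\sigma_2$ are exactly tied while everything bridging them is strictly worse. I expect this is most cleanly discharged by writing down the explicit $\delta$ together with the finitely many optimal LP values — exploiting the symmetry, and where needed the {\tt PolyhedralSets} machinery already used for the $4$-leaf decomposition — rather than by a purely conceptual argument.
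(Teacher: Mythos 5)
Your proposal takes essentially the same route as the paper: exhibit an explicit $\delta \in \mathbb{R}^{\binom{6}{2}}$ and certify by a finite (LP/polyhedral) computation that $C(\delta,\mathcal{T}_6)=C(\delta,\mathcal{G}_{2,6})$ is a disjoint union of polyhedra supported on two different binary topologies. The only substantive difference is that the paper's witness is not symmetric in the way you suggest --- its two components have dimensions $4$ and $6$ and the two trees even share the split $134|256$ --- so the symmetry heuristic is dispensable; what remains essential, and what your outline correctly identifies, is actually writing down the explicit $\delta$ and discharging the global check over the remaining topologies.
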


\begin{proof}
Let 
$$\delta = (35,22,32,49,42,26,34,23,32,39,41,34,46,49,32)$$
be the metric in $\mathbb{R}^{6 \choose 2}$ with coordinates
$(\delta_{12},\delta_{13},\delta_{14},
\ldots, 
\delta_{45},\delta_{46}, \delta_{56})$. 
Then $d(\delta,\mathcal{T}_6) = 
 d(\delta, \mathcal{G}_{2,6}) = 5.$ 
The set $C(\delta,\mathcal{T}_6)$ 
is the union of two disjoint polyhedra. 
One is four-dimensional and corresponds to the 
6-leaf tree with nontrivial splits 
$13|2456,134|256$ and $25|1346$ 
and the other is six-dimensional and corresponds to the 
6-leaf tree with nontrivial splits 
$14|2356,134|256$ and $56|1234$. 
In this instance, 
$C(\delta,\mathcal{T}_6)
= C(\delta, \mathcal{G}_{2,6})$.
\end{proof}

Unfortunately, many of the less than desirable properties exhibited in the ultrametric case hold for tree metrics. 
Simple modifications to the constructions for ultrametics
give analogous results for tree metrics and unrooted trees
to the results in Propositions \ref{prop:polytomy} and \ref{prop:dim}, and Corollary
\ref{cor:tritomy}.
We conclude with one such example about the 
possible dimension of the set of $l^\infty$-closest tree metrics
to a point.

\begin{prop}
\label{prop:dim}
Let $\mathcal{T} \in B(n)$. There exists $\delta \in \mathbb{R}^{n \choose 2}$ such
that $\dim(C(\delta ,\mathcal{T}_n)) = 2n - 6$ and every 
tree metric in $C(\delta ,\mathcal{T}_n)$ has $\mathcal{T}$
as a tree metric representation.
\end{prop}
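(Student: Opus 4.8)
The plan is to mimic the proof of the ultrametric version (Proposition \ref{prop:dim} for $U_n$), replacing the affine hull of a maximal cone of $U_n$ with the affine hull of a maximal cone of $\mathcal{T}_n$, and replacing Theorem \ref{dimoftype}'s bookkeeping accordingly. Recall that the maximal cone of $\mathcal{T}_n$ corresponding to a fixed binary topology $\mathcal{T} \in B(n)$ is, modulo its lineality space, a simplicial cone of dimension $2n-3$ (an $n$-leaf binary tree has $2n-3$ edges), so its affine hull $L$ is a linear subspace of $\mathbb{R}^{\binom{n}{2}}$ of dimension $2n-3$. The strategy is: (i) pick a tree metric $\delta^0$ with representation $(\mathcal{T}:w)$ lying in the interior of this cone; (ii) perturb it by a small vector $v$ that is ``transverse'' to exactly one coordinate pair so that optimizing to $\mathcal{T}_n$ agrees locally with optimizing to $L$; (iii) compute $\type_L(\delta^0 + \varepsilon v)$ and check that its rank in $\mathcal{O}_L$ is exactly $3$, whence Theorem \ref{dimoftype} gives $\dim C(\delta,\mathcal{T}_n) = (2n-3) - 3 = 2n-6$.

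Concretely, I would use the four-point description. Fix a quartet $\{i,j,k,l\}$ with induced quartet topology $ij|kl$ in $\mathcal{T}$, so that on a tree metric of topology $\mathcal{T}$ one has $\delta_{ik}+\delta_{jl} = \delta_{il}+\delta_{jk} \ge \delta_{ij}+\delta_{kl}$. Choose the perturbation $v = e_{ik} - e_{jk}$ (or a small multiple), which breaks the equality $\delta_{ik}+\delta_{jl} = \delta_{il}+\delta_{jk}$ but — for $\varepsilon$ small relative to the ``slack'' in all the defining inequalities of the cone and relative to all edge weights in $w$ — does not change which maximal cone of $\mathcal{T}_n$ is nearest, so that $C(\delta,\mathcal{T}_n) = C(\delta,L)$ as in Proposition \ref{prop:dim}. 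The type of $\delta = \delta^0 + \varepsilon v$ with respect to $L$ is then supported on the three coordinates $\{ij, ik, jk\}$ (the quartet relation $x_{ik}+x_{jl}-x_{il}-x_{jk}=0$ together with the sign pattern forced by $v$), with sign $+$ on one, $-$ on another, $0$ elsewhere; one checks this support has rank $3$ in $\mathcal{M}_L$ because the three tree-metric relations $x_{ik}+x_{jl}=x_{il}+x_{jk}$ involving these indices are independent and no smaller subset forces a dependency. Applying Theorem \ref{dimoftype} then yields $\dim C(\delta,\mathcal{T}_n) = (2n-3)-3 = 2n-6$, and since $\varepsilon$ is small, every tree metric in this set still realizes $\mathcal{T}$.

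The main obstacle I anticipate is step (ii)–(iii): unlike the ultrametric case, where the affine hull of a maximal cone of $U_n$ is cut out by the very clean relations $x_{ik}-x_{jk}=0$ and the relevant $\sigma$ has rank exactly $1$, the linear hull of a maximal cone of $\mathcal{T}_n$ is cut out by the four-point \emph{equalities}, and I must verify carefully that the perturbed point's type has rank precisely $3$ (not $1$, $2$, or more) in the matroid $\mathcal{M}_L$ — equivalently, that deleting three judiciously chosen coordinate hyperplanes from $L$ drops its dimension by exactly $3$, while deleting fewer does not. This is a linear-algebra computation on the explicit cut-out equations of the tree-metric cone; it is where the number $2n-6$ (rather than $2n-4$) comes from, reflecting that a single quartet perturbation pins down a rank-$3$ flat. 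I would carry it out either by exhibiting the three coordinates explicitly and checking independence of the corresponding rows of a defining matrix $V$ for $L^\perp$, or by a direct dimension count on the tree after contracting the affected edges. Everything else is a routine adaptation of the earlier argument.
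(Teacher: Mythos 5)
Your overall strategy is the same as the paper's: replace $\mathcal{T}_n$ locally by the linear hull $L$ (dimension $2n-3$) of the maximal cone for $\mathcal{T}$, perturb a tree metric $z$ in the interior of that cone, compute the type of the perturbed point, and invoke Theorem \ref{dimoftype}. But the step you yourself flag as the obstacle is where your proposal goes wrong, and the error is not just a missing verification. You take the perturbation $v = e_{ik}-e_{jk}$ over from the ultrametric argument and claim the resulting type is supported on the three coordinates $\{ij,ik,jk\}$. No element of $\mathcal{O}_L$ has that support: by Lemma \ref{possibletypes} every type is the sign vector of a linear functional vanishing on $L$, and the functionals vanishing on $L$ are spanned by the four-point relations $x_{ik}+x_{jl}-x_{il}-x_{jk}$, each of which involves four distinct leaves and four coordinates. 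Tree metrics on any three leaves fill all of $\mathbb{R}^3$, so there is simply no relation supported on the pairs within a triple --- this is exactly the structural difference from the ultrametric case, where $x_{ik}-x_{jk}=0$ \emph{is} a defining relation. Moreover, with your $v$ the nearest point of the single hyperplane $\{x_{ik}+x_{jl}-x_{il}-x_{jk}=0\}$ to $\delta$ fails the other quartet relations of $\mathcal{T}$ (for $n\ge 5$), so the actual type of $z+\varepsilon v$ is a messier sign vector whose rank you have no control over; you cannot conclude rank $3$, hence cannot conclude dimension $2n-6$.

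The fix is to perturb along a circuit of $\mathcal{O}_L$ itself, which is what the paper does: set $\delta = z + \varepsilon(e_{ik}+e_{jl}-e_{il}-e_{jk})$ for a quartet $ij|kl$ of $\mathcal{T}$. Then $\type_L(\delta)$ is the sign vector $\sigma$ with $\sigma_{il}=\sigma_{jk}=+$, $\sigma_{ik}=\sigma_{jl}=-$ and all other entries zero --- a circuit of $\mathcal{O}_L$ with $|\sigma|=4$, hence of rank $3$ in $\mathcal{M}_L$ --- and Theorem \ref{dimoftype} gives $\dim C(\delta,L) = (2n-3)-3 = 2n-6$. This also answers your own question about why the answer is $2n-6$ rather than $2n-4$: the relevant circuit has support of size four, not two. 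The remaining ingredients of your write-up (taking $\varepsilon$ small enough that $C(\delta,\mathcal{T}_n)=C(\delta,L)$ and that every optimum stays in the open cone for $\mathcal{T}$) match the paper and are fine.
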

\begin{proof} Let $z$ be a tree metric with
tree metric representation $\mathcal{T}$.
Let $L$ be the affine hull of the cone of tree metrics corresponding to 
$\mathcal{T}$. 
This is the linear space of dimension $2n - 3$ defined by all
of the equalities of the form $x_{ik} + x_{jl} - x_{il} - x_{kj} = 0$ where $ij|kl$ is an induced quartet of $\mathcal{T}$.

Choose $i,j,k$ and $l$ so that $ij|kl$ is a quartet of $\mathcal{T}$ . 
For $\varepsilon > 0$, let 
$\delta = z + \varepsilon(e_{ik} + e_{jl} - e_{il} - e_{kj})$. 
Since $\mathcal{T}$ is binary, if $\varepsilon$ is sufficiently small, $C(\delta ,\mathcal{T}_n) = C(\delta ,L)$. 
The type of $z$ relative to $L$ is the signed vector 
$\sigma$ where 
$\sigma_{il} = +, \sigma_{kj} = +, \sigma_{ik} = -, \sigma_{jl} = -$, and all other entries are zero.
The rank of $\sigma$ in $\mathcal{O}_L$ is three, and
thus by Theorem \ref{dimoftype}, $\dim(C(\delta ,U_n)) = (2n - 3) - 3 =  2n - 6$.
\end{proof}

\section*{Acknowledgments}
We are very grateful to Seth Sullivant for suggesting this project, for many helpful conversations,
and for his feedback on early drafts.
Daniel Bernstein was partially supported by the US National Science Foundation (DMS 0954865) and the David and Lucille Packard Foundation. Colby Long was partially supported by the Mathematical Biosciences Institute and the National Science Foundation (DMS 1440386).

\bibliography{../trees}
\bibliographystyle{plain}

\end{document}